\newcommand{\C} {{\mathbb C}}                              
\newcommand{\R} {{\mathbb R}}                              
\newcommand{\Z} {{\mathbb Z}}                              
\newcommand{\F} {{\mathbb F}}
\newtheorem{thm}{Theorem}[section]
\newtheorem{prop}[thm]{Proposition}
\newtheorem{cor}[thm]{Corollary}
\newtheorem{lem}[thm]{Lemma}
\begin{document}

\title{A Density Version of the Vinogradov Three Primes Theorem}
\author{Xuancheng Shao}
\address{Department of Mathematics \\ Stanford University \\
450 Serra Mall, Bldg. 380\\ Stanford, CA 94305-2125}
\email{xshao@math.stanford.edu}

\begin{abstract}
We prove that if $A$ is a subset of the primes, and the lower
density of $A$ in the primes is larger than $5/8$, then all
sufficiently large odd positive integers can be written as the sum
of three primes in $A$. The constant $5/8$ in this statement is the
best possible.
\end{abstract}

\maketitle

\section{Introduction}

In this paper, we study the classical problem of writing a positive
integer as sum of primes. The famous Goldbach conjecture says that
every even positive integer at least $4$ is the sum of two primes.
This problem is considered to be out of the scope of current
techniques. However, the ternary Goldbach problem of writing
positive integers as sums of three primes is much more tractable. In
1937, Vinogradov \cite{vinogradov1937} proved that all sufficiently
large odd positive integers can be written as sums of three primes.
See Chapter 8 of \cite{nathanson} for a classical proof using circle
method, and \cite{heath-brown} for an alternate proof. Subsequent
works have been done to refine what ``sufficiently large'' means.
Very recently it was announced in \cite{helfgott2013} that
Vinogradov's theorem is true for {\em all} odd positive integers. We
refer the reader to \cite{helfgott2012,helfgott2013} for this
exciting news and the history in this numerical regard.

The purpose of this paper is to prove a density version of the Vinogradov's three prime theorem. More precisely, let $\mathcal{P}$ be the set of all primes. For a subset $A\subset\mathcal{P}$, the lower density of $A$ in $\mathcal{P}$ is defined by
\[ \underline{\delta}(A)=\liminf_{N\rightarrow\infty}\frac{|A\cap [1,N]|}{|\mathcal{P}\cap [1,N]|}. \]
We would like to show that all sufficiently large odd positive integers can be written as sums of three primes in $A$, provided that $\underline{\delta}(A)$ is larger than a certain threshold.

\begin{thm}\label{thm:main}
Let $A\subset\mathcal{P}$ be a subset of the primes with
$\underline{\delta}(A)>5/8$. Then for sufficiently large odd
positive integers $N$, there exist $a_1,a_2,a_3\in A$ with
$N=a_1+a_2+a_3$.
\end{thm}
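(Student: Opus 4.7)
The plan is to combine the transference principle for the primes, in the style of Green and Green--Tao, with a sharp sumset estimate in the reduced residue system $(\Z/W)^*$; the constant $5/8$ emerges entirely from the latter, while the transference step is by now standard.

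First, apply the $W$-trick. Fix a slowly growing $w=w(N)$, set $W=\prod_{p\le w}p$, and for each reduced residue $b\in(\Z/W)^*$ define the weight
\[
f_b(n)=\frac{\phi(W)}{W}\log(Wn+b)\,\mathbf{1}_A(Wn+b),\qquad n\le N/W.
\]
Each $f_b$ is pointwise bounded by a standard enveloping-sieve majorant satisfying the pseudorandomness axioms of the Green--Tao transference theorem, and hence admits a bounded dense model $g_b\colon\Z/N'\to[0,1]$ of the same mean, agreeing with $f_b$ in the $U^2$-norm to any prescribed $o(1)$. Because the linear equation $x_1+x_2+x_3=N$ has complexity~$1$ in the Green--Tao sense and is therefore controlled by the $U^2$-norm, the number of representations $N=a_1+a_2+a_3$ with $a_i\in A$ equals, up to admissible error,
\[
c_W\sum_{\substack{b_1+b_2+b_3\equiv N\!\!\pmod W\\ b_i\in(\Z/W)^*}}(g_{b_1}*g_{b_2}*g_{b_3})(N)
\]
for a positive constant $c_W$. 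Since the transferred functions $g_b$ inherit uniformly small non-trivial Fourier coefficients, each inner convolution approximates $\mathbb{E}g_{b_1}\mathbb{E}g_{b_2}\mathbb{E}g_{b_3}$ pointwise, hence is positive as soon as all three means are. Writing $S=\{b\in(\Z/W)^*:\underline{\delta}(A\cap\{p\equiv b\pmod W\})>0\}$, the hypothesis yields $|S|/\phi(W)>5/8$, and the whole theorem is reduced to the following statement in $\Z/W$: for every odd integer $N$, one has $N\in S+S+S\pmod{W}$.

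This sumset claim is the heart of the argument, and the constant $5/8$ is sharp. Witness the example $W=30$ and $S=\{1,7,11,13,19\}\subset(\Z/30)^*$: here $|S|/\phi(30)=5/8$, yet $17\notin S+S+S\pmod{30}$, because the only $s\in S$ with $s\equiv 2\pmod 3$ is $s=11$, and $11+11+s\not\equiv 17\pmod{30}$ for any $s\in S$. To prove the claim in general I would combine Fourier analysis on $\Z/W$ with a Kneser-type structural input: if $N\notin S+S+S$, then $S$ must be concentrated on a small union of cosets of a proper subgroup $H\le\Z/W$, and a case analysis indexed by $[\Z/W:H]$---with the extremal cases being index~$2$ and index~$4$, coming from the primes $3$ and $5$---produces exactly the $5/8$ threshold. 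This step is the main obstacle: the naive Plünnecke or Kneser bound yields only $2/3$, and pushing down to the sharp $5/8$ requires simultaneously exploiting the coprimality constraint $S\subseteq(\Z/W)^*$ and the parity of $N$, so that the local obstructions at the primes $2$, $3$, and $5$ cannot compound beyond the extremal configuration exhibited above. A minor secondary care is to ensure that the $o(1)$ transference error is smaller than $\underline{\delta}(A)-5/8$, so that the strict inequality is inherited in the dense model.
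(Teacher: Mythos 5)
Your overall architecture (W-trick, transference, reduction to a local statement in $(\Z/W)^*$) matches the paper's, and your extremal example correctly shows $5/8$ is sharp. But there are two genuine gaps. The first is in the transference step: you reduce to showing $N\in S+S+S\pmod W$ where $S$ is merely the set of residues $b$ in which $A$ has \emph{positive} relative density, on the grounds that ``each inner convolution approximates $\mathbb{E}g_{b_1}\mathbb{E}g_{b_2}\mathbb{E}g_{b_3}$ pointwise, hence is positive as soon as all three means are.'' That claim is false: the dense models $g_b$ are bounded functions of the correct mean, but they are not Fourier-uniform (only the differences $f_b-g_b$ are), so $g_{b_1}*g_{b_2}*g_{b_3}$ need not be close to the product of the means at any given point. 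For bounded functions on $\Z/N'$ one only gets pointwise positivity of the triple convolution from a Cauchy--Davenport/pigeonhole argument, which requires the three means to sum to more than $1$; after the size restriction $a_i<2N/3$ (needed to convert a congruence mod $N'$ into an equation in $\Z$) this becomes the requirement that the relative densities of $A$ in the classes $b_1,b_2,b_3$ sum to more than $3/2$. So the local statement you actually need is a \emph{weighted} one: given $f:\Z_W^*\to[0,1]$ of average $>5/8$, every residue $x$ is a sum $a_1+a_2+a_3$ with $f(a_1)f(a_2)f(a_3)>0$ \emph{and} $f(a_1)+f(a_2)+f(a_3)>3/2$. This is Proposition~\ref{prop:main} of the paper, and it is strictly stronger than the set statement $S+S+S=\Z_W$ (Corollary~\ref{cor:main}); your unweighted reduction cannot be repaired without it.

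The second gap is that even the combinatorial core is only gestured at. A Kneser-type structural argument in $\Z/W$ is problematic because $S$ has density only $\tfrac58\,\phi(W)/W=o(1)$ in $\Z/W$, so the nontrivial content lies entirely in the multiplicative structure of $(\Z/W)^*$ across the prime factors of $W$, and your proposed case analysis ``indexed by $[\Z/W:H]$'' is not carried out. The paper instead proves the weighted statement by induction on the prime factors of $m$: two elementary lemmas about decreasing sequences (Lemmas~\ref{lem:seq_sym} and~\ref{lem:seq}) drive the induction for primes $p\ge 7$ via Cauchy--Davenport in $\F_p$, reducing everything to a robust statement at the modulus $15$ (Proposition~\ref{prop:15}) that is verified by linear programming and a finite computer check. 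You would need to supply an argument of comparable substance for the step you identify as ``the main obstacle,'' and, as explained above, it must be the weighted version with the $3/2$ conclusion, not just $S+S+S=\Z_W$.
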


The constant $5/8$ in Theorem \ref{thm:main} cannot be improved. In fact, we may take
\[ A=\{p\in\mathcal{P}:p\equiv 1,2,4,7,13\text{ mod }15\}. \]
Then $\underline{\delta}=5/8$. It is not hard to see that if $N\equiv 14$ (mod $15$) then $N$ cannot be written as sum of three elements of $A$.

Our study of this problem is motivated by the work of Li and Pan \cite{li2007}. They proved the following asymmetric version of Theorem \ref{thm:main}:

\begin{thm}\label{thm:asym}
Let $A_1,A_2,A_3\subset\mathcal{P}$ be three subsets of the primes
with
$\underline{\delta}(A_1)+\underline{\delta}(A_2)+\underline{\delta}(A_3)>2$.
Then for sufficiently large odd positive integers $N$, there exists
$a_i\in A_i$ ($i=1,2,3$) with $N=a_1+a_2+a_3$.
\end{thm}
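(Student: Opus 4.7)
The plan is to prove Theorem~\ref{thm:asym} by the Hardy--Littlewood circle method, adapted to subsets of the primes of positive relative density. Introduce the exponential sums
\[
F_i(\alpha) \;=\; \sum_{\substack{p\leq N \\ p\in A_i}} (\log p)\, e(p\alpha),
\qquad
R(N) \;=\; \int_0^1 F_1(\alpha) F_2(\alpha) F_3(\alpha)\,e(-N\alpha)\,d\alpha;
\]
the theorem follows once we show $R(N)>0$ for all sufficiently large odd $N$, the prime-power contributions being $O(N^{1/2}\log^2 N)$ and harmless. Decompose $[0,1]$ into major arcs $\mathfrak{M}$ around reduced fractions $a/q$ with $q\leq Q:=(\log N)^B$ and complementary minor arcs $\mathfrak{m}$, for a large constant $B$ chosen later.

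For the minor arcs the pointwise majorant $1_{A_i}\leq 1_{\mathcal P}$ allows Vaughan's identity applied to $\Lambda(n)1_{A_i}(n)$, with $1_{A_i}$ absorbed as a bounded coefficient in the Type~I/II sums, to produce the Vinogradov-type bound $\sup_{\alpha\in\mathfrak m}|F_i(\alpha)|\ll N(\log N)^{-A}$ for any fixed $A>0$. Combining this sup bound on one factor with the Parseval estimates $\|F_j\|_2\ll(N\log N)^{1/2}$ on the other two gives $\int_{\mathfrak m}|F_1F_2F_3|\,d\alpha\ll N^2(\log N)^{1-A}$, which is $o(N^2)$.

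The major arc analysis is the main obstacle, since no Siegel--Walfisz theorem is available for arbitrary positive-density subsets of primes. My strategy is to pass, via a diagonal compactness argument, to a subsequence $N_k\to\infty$ along which the conditional densities
\[
\lambda_i(r,q)\;=\;\lim_{k\to\infty}\frac{|A_i\cap[1,N_k]\cap(r+q\mathbb Z)|}{\pi(N_k;q,r)}\;\in\;[0,1]
\]
exist for every fixed $q$ and every $r\in(\mathbb Z/q\mathbb Z)^*$; these necessarily satisfy $\sum_r\lambda_i(r,q)\geq\phi(q)\underline{\delta}(A_i)$. Along this subsequence the major arcs contribute
\[
\int_{\mathfrak M} F_1 F_2 F_3\, e(-N\alpha)\,d\alpha \;\sim\; \tfrac{1}{2}N^2\cdot\mathfrak{S}(N;A_1,A_2,A_3),
\]
where $\mathfrak{S}=\prod_p\sigma_p(N)$ is a weighted singular series whose local factor $\sigma_p(N)$ is a positive constant multiple of
\[
\sum_{\substack{r_1,r_2,r_3\in(\mathbb Z/p\mathbb Z)^*\\ r_1+r_2+r_3\equiv N\pmod{p}}}\lambda_1(r_1,p)\lambda_2(r_2,p)\lambda_3(r_3,p).
\]
Positivity of each $\sigma_p(N)$ is exactly where the density hypothesis is used. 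Setting $T_i^{(p)}=\{r\in(\mathbb Z/p\mathbb Z)^*:\lambda_i(r,p)>0\}$ and using $\lambda_i\leq 1$, the density bound above gives $|T_i^{(p)}|\geq(p-1)\underline{\delta}(A_i)$; the strict inequality $\sum_i\underline{\delta}(A_i)>2$ together with integrality then forces $\sum_i|T_i^{(p)}|\geq 2p-1$, and Cauchy--Davenport on $\mathbb Z/p\mathbb Z$ yields $T_1^{(p)}+T_2^{(p)}+T_3^{(p)}=\mathbb Z/p\mathbb Z$ for every prime $p\geq 3$. The case $p=2$ reduces to the assumption that $N$ is odd, since each $T_i^{(2)}=\{1\}$. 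Because $\sigma_p(N)=1+O(p^{-2})$ uniformly for large $p$, the product $\prod_p\sigma_p(N)$ converges to a strictly positive quantity, yielding $R(N_k)\gg N_k^2$ along the subsequence. Running the same argument along every convergent subsequence extends this to $R(N)\gg N^2$ for all sufficiently large odd $N$, completing the proof.
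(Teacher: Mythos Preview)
The paper does not itself prove Theorem~\ref{thm:asym}; it is quoted from Li and Pan~\cite{li2007}, whose method is the transference principle reproduced in Section~\ref{sec:trans}. Your direct circle-method attack breaks down at the minor arcs. The claimed bound $\sup_{\alpha\in\mathfrak m}|F_i(\alpha)|\ll N(\log N)^{-A}$ is false for an arbitrary subset $A_i\subset\mathcal P$. Vaughan's identity decomposes $\Lambda(n)$ into sums of the shape $\sum_{mk\le N}a_m b_k\, e(mk\alpha)$, but after multiplication by $1_{A_i}$ the extra factor $1_{A_i}(mk)$ depends on the \emph{product} $mk$ and cannot be ``absorbed as a bounded coefficient'' into either variable; the bilinear structure is destroyed and the Type~I/II estimates (Weyl differencing, Cauchy--Schwarz over one variable) no longer apply. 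Concretely, fix any minor-arc point $\alpha$ and set $A=\{p:\cos(2\pi p\alpha)>0\}$; Vinogradov's equidistribution gives $A$ relative density $1/2$ in the primes, yet $\operatorname{Re}F_A(\alpha)\gg N$. Thus no argument that is insensitive to which dense subset is chosen can produce a pointwise minor-arc saving.

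This is precisely why Li--Pan, and this paper, use transference instead of a direct circle method: one never bounds $\widehat{a_i}$ pointwise on minor arcs, but only proves pseudorandomness of a \emph{majorant} $\mu_i$ built from the full set of primes (Lemma~\ref{lem:psd}, which \emph{is} Vinogradov's bound) together with the restriction estimate $\|\widehat{a_i}\|_q\le C(q)$, and then passes to a dense model in $\Z_N$ where Lemma~\ref{lem:set} (a quantitative Cauchy--Davenport) finishes the job. Your major-arc analysis has a secondary gap as well: the diagonal compactness only pins down the densities $\lambda_i(r,q)$ at the endpoints $N_k$, not the counting functions on $[1,t]$ for all $t\le N_k$ that partial summation would require, so the claimed asymptotic for $\int_{\mathfrak M}F_1F_2F_3\,e(-N\alpha)\,d\alpha$ is not justified. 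But the minor-arc failure is the decisive obstruction; fixing it essentially forces you back to the transference framework of Section~\ref{sec:trans}.
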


Their result is sharp in two different ways. One can either take
$A_1=A_2=\mathcal{P}$ and $A_3=\emptyset$, or take
$A_1=\mathcal{P}\setminus\{3\}$ and
$A_2=A_3=\{p\in\mathcal{P}:p\equiv 1(\text{ mod }3)\}$. Note that in
these two examples, the three sets $A_1,A_2,A_3$ are very distinct
from each other. Theorem \ref{thm:asym} immediately implies a weaker
version of Theorem \ref{thm:main}, with $5/8$ replaced by $2/3$.

In the examples considered above, the sumset
$A+A+A=\{a_1+a_2+a_3:a_1,a_2,a_3\in A\}$ fails to cover all
sufficiently large odd integers due to local obstructions. It is
natural to ask what the threshold of $\underline{\delta}(A)$ becomes
if local obstructions are excluded. We prove the following result in
this direction. For any positive integer $W$ and any reduced residue
$b\pmod W$, the lower density of $A$ in the residue class $b\pmod W$
is defined by
\[
\underline{\delta}(A;b,W)=\liminf_{N\rightarrow\infty}\frac{\#\{p\in
A\cap [1,N]:p\equiv b\pmod W\}}{\#\{p\in\mathcal{P}\cap
[1,N]:p\equiv b\pmod W\}}.
\]

\begin{thm}\label{thm:main2}
For any $\delta>0$ there is a positive integer $W=W(\delta)$ such
that the following statement holds. Let $A\subset\mathcal{P}$ be a
subset of the primes satisfying the density conditions
\begin{equation}\label{eq:mod3}
2\underline{\delta}(A;1,3)+\underline{\delta}(A;2,3)\geq 3/2+\delta, \ \
2\underline{\delta}(A;2,3)+\underline{\delta}(A;1,3)\geq 3/2+\delta,
\end{equation}
as well as the local conditions
\begin{equation}\label{eq:modW}
\underline{\delta}(A;b,W)\geq\delta
\end{equation}
for any reduced residue $b\pmod W$. Then for sufficiently large odd
positive integers $N$, there exist $a_1,a_2,a_3\in A$ with
$N=a_1+a_2+a_3$.
\end{thm}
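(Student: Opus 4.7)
The plan is to combine the $W$-trick with a Green--Tao style transference principle and the circle method, adapting the strategy of Li--Pan's proof of Theorem \ref{thm:asym}.

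Let $w=w(\delta)$ be a threshold and set $W=\prod_{p\leq w}p$. For each reduced residue $b\pmod W$, the log-weighted indicator
\[
f_b(n)=\log(Wn+b)\cdot 1_A(Wn+b)\cdot 1_{[N/3,N]}(Wn+b)
\]
has average approximately $\underline{\delta}(A;b,W)$. Using an enveloping sieve for primes in arithmetic progressions one constructs a pseudorandom majorant $\nu_b\geq f_b$, and Green's transference theorem then decomposes $f_b=g_b+h_b$, where $g_b$ is bounded of the same mean and $h_b$ has negligible Fourier mass.

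Apply the circle method to the representation count
\[
R(N)=\sum_{a_1+a_2+a_3=N}\prod_{i=1}^3\bigl(\log a_i\cdot 1_A(a_i)\bigr),
\]
stratifying by the residues $(b_1,b_2,b_3)\pmod W$ of the $a_i$. For $w$ large enough, the minor arcs are absorbed into the $h_b$'s and vanish by the pseudorandomness of the $\nu_b$'s; the main-term contribution takes the form
\[
R(N)\sim c_W\frac{N^2}{(\log N)^3}\cdot S_W(N),\qquad S_W(N)=\sum_{\substack{(b_i,W)=1\\ b_1+b_2+b_3\equiv N\,(W)}}\prod_{i=1}^3\underline{\delta}(A;b_i,W).
\]

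It remains to prove $S_W(N)>0$ uniformly for large odd $N$. Since $3\mid W$, each reduced residue $b$ is $\equiv 1$ or $2\pmod 3$, so one may stratify triples by mod-$3$ type. For $N\equiv 1\pmod 3$ only the type $(1,1,2)$ is admissible, and a Cauchy--Davenport--Kneser inequality in $\Z/W\Z$ yields the needed lower bound on $S_W(N)$ under $2\underline{\delta}(A;1,3)+\underline{\delta}(A;2,3)\geq 3/2+\delta$; the case $N\equiv 2\pmod 3$ is symmetric, while for $N\equiv 0\pmod 3$ the types $(1,1,1)$ and $(2,2,2)$ are both available and $\underline{\delta}(A;1,3)+\underline{\delta}(A;2,3)>1$ (a consequence of \eqref{eq:mod3}) is sufficient. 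The uniform lower bound \eqref{eq:modW} is used throughout to guarantee that no reduced residue class mod $W$ is degenerate, which is essential for the mod-$3$ averaging and for ruling out subgroup obstructions in the sumset inequalities.

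The main obstacle is this final additive-combinatorial step: producing a sharp sumset inequality in $\Z/W\Z$ that matches the $3/2$ threshold of \eqref{eq:mod3}. This demands a Kneser / Cauchy--Davenport analysis sensitive both to the multiplicative structure of $(\Z/W\Z)^*$ and to the mod-$3$ stratification, and the transference errors (ultimately reducing to Vinogradov-type bilinear estimates for exponential sums over primes in progressions) must be controlled carefully enough that the $3/2$ threshold is not lost.
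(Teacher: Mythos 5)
Your overall architecture --- a $W$-trick, a Green-type transference with a pseudorandom majorant for primes in a fixed progression $b \pmod W$, and a local additive-combinatorial input at the $3/2$ threshold --- matches the paper's. But there are two genuine problems, one conceptual and one a missing key lemma.

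First, the circle-method framing is wrong in a way that matters. For an arbitrary dense subset $A$ of the primes there is no asymptotic $R(N)\sim c_W N^2(\log N)^{-3}S_W(N)$: the transference principle only decomposes the (normalized) indicator of $A$ in a single progression $b\pmod W$ into a bounded set-like part plus a Fourier-uniform part, and for the set-like part the representation count is controlled by a Cauchy--Davenport-type \emph{lower bound}, not by a product of densities --- $A$ may correlate with Bohr sets, so the count can be far from the ``expected'' main term, and can even vanish if the three densities do not sum to more than $1$. Consequently the condition you actually need is not $S_W(N)>0$ (which is trivially true under \eqref{eq:modW}) but the existence of a \emph{single} triple of reduced residues with $b_1+b_2+b_3\equiv N\pmod W$ and $\underline{\delta}(A;b_1,W)+\underline{\delta}(A;b_2,W)+\underline{\delta}(A;b_3,W)>3/2$; one then runs transference only for that triple. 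The threshold is $3/2$ rather than $1$ because of the factor $2/3$ lost when truncating $A$ to $x<2N/3$ so that a solution modulo a prime $N'\approx N/W$ lifts to a solution in $\Z$. (Your truncation to $[N/3,N]$ is the wrong window: three such elements cannot sum to $N$ unless each equals $N/3$.)

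Second, the step that produces this triple is exactly what you label ``the main obstacle'' and leave unproved, and it is the heart of the matter. The mod-$3$ case analysis you sketch only controls the averages $\alpha_1,\alpha_2$ of the local densities over the two reduced classes mod $3$; to extract a specific triple with prescribed sum mod $W$ (not just mod $3$) and $f$-sum exceeding $3/2$, the paper uses Proposition \ref{prop:main2}, which after the mod-$3$ case analysis invokes Theorem 1.2 of \cite{li2007}: three functions on $\Z_{m'}^*$ taking values in $(0,1]$ whose averages sum to more than $3/2$ admit, for every target residue, a triple with the required sum of values. This is also precisely where \eqref{eq:modW} enters: it guarantees $f(b)>0$ for every reduced residue $b$, a hypothesis of that theorem. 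Without this lemma (or an equivalent Kneser-type argument carried out in full), the proof is incomplete at its decisive step.
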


In the case when
$\underline{\delta}(A;1,3)=\underline{\delta}(A;2,3)$, the
assumption on the density of $A$ becomes
$\underline{\delta}(A)>1/2$. We will see later that the conditions
\eqref{eq:mod3} and \eqref{eq:modW} naturally come from the
hypotheses in Proposition \ref{prop:main2} below. It is an
interesting question whether the condition
$\underline{\delta}(A)>1/2$ can be relaxed (maybe assuming stronger
local conditions). One nice consequence of breaking the density
$1/2$ barrier is that it opens the door for giving a new proof of
Vinogradov's theorem without using the theory of $L$-functions. This
will be carried out in a forthcoming paper \cite{shao}.

The proof of Theorem \ref{thm:main2} is by using the so-called
transference principle in additive combinatorics, which was used to
solve many other problems of similar nature (for example, see
\cite{green2005} and \cite{green2006} for the proof of Roth's
theorem in the primes). In our setting of representation of integers
as sums of three primes, this method was worked out in Section 3 of
\cite{li2007}. We reproduce this argument in Section \ref{sec:trans}
for the sake of completeness, and and we also hope to outline the
main ingredients of the transference principle without too many
technical details.

The main innovation of the current paper is the first part of the
proof of Theorem \ref{thm:main}, which asserts that $A+A+A$ must
cover all residue classes modulo $m$ for any odd $m$, provided that
the density of $A$ in $\Z_m^*$ is greater than $5/8$. The precise
statement is as follows.

\begin{prop}\label{prop:main}
Let $m$ be an odd squarefree positive integer. Let $f:\Z_m^*\rightarrow [0,1]$ be an arbitrary function with average larger than $5/8$:
\[ \sum_{x\in\Z_m^*}f(x)>\tfrac{5}{8}\phi(m). \]
Then for any $x\in\Z_m$, there exists $a_1,a_2,a_3\in\Z_m^*$ with
$x=a_1+a_2+a_3$, such that
\[ f(a_1)f(a_2)f(a_3)>0,\quad f(a_1)+f(a_2)+f(a_3)>\tfrac{3}{2}. \]
\end{prop}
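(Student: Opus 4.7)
I would prove the proposition by induction on the number $\omega(m)$ of distinct prime factors of $m$, using the Chinese Remainder Theorem to isolate the prime $3$ when it divides $m$. This is the case that forces the tight $5/8$ threshold, as witnessed by the $m = 15$ counterexample.

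\textbf{Base case.} When $m = p$ is an odd prime I would treat $p = 3$ and $p \geq 5$ separately. For $p = 3$, the hypothesis $f(1) + f(2) > 5/4$ combined with $f \leq 1$ forces $f(1), f(2) > 1/4$, and a direct case check over $x \in \Z_3$ produces the triple; the extremal subcase is $x \equiv 2 \pmod{3}$, where the only valid pattern is $(1, 2, 2)$ and its $f$-sum $f(1) + 2f(2)$ exceeds $5/4 + 1/4 = 3/2$. For $p \geq 5$ the density $>5/8$ is far from tight, and Cauchy--Davenport combined with a simple averaging argument over representations handles everything with room to spare.

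\textbf{Inductive step.} For $\omega(m) \geq 2$, the sub-case $3 \nmid m$ admits a direct argument via averaging modulo any prime divisor of $m$, and works at a density threshold well below $5/8$. In the critical sub-case $3 \mid m$, write $m = 3n$ with $\gcd(n, 3) = 1$ and use $\Z_m^* \cong \Z_3^* \times \Z_n^*$ to decompose $f$ into $f_1, f_2 : \Z_n^* \to [0,1]$ according to the mod-$3$ residue. Writing $\mu_i = \overline{f_i}$, the hypothesis reads $\mu_1 + \mu_2 > 5/4$; since $\mu_i \leq 1$, both $\mu_i > 1/4$, and after relabeling $\mu_1 > 5/8 \geq \mu_2$. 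For $x = (i, y) \in \Z_3 \times \Z_n$, the possible mod-$3$ residue patterns of $(a_1, a_2, a_3)$ are, up to permutation, $(1,1,1)$ or $(2,2,2)$ for $i = 0$, $(1,1,2)$ for $i = 1$, and $(1,2,2)$ for $i = 2$. The case $i = 0$ follows by applying the inductive hypothesis to $f_1$ on $\Z_n^*$ (whose average exceeds $5/8$) and lifting via pattern $(1,1,1)$. The cases $i = 1, 2$ both reduce to an asymmetric key lemma: if $n$ is odd squarefree with $\gcd(n, 3) = 1$ and $g_1, g_2 : \Z_n^* \to [0, 1]$ are nonzero functions with $\overline{g_1} + 2\overline{g_2} > 3/2$, then for every $y \in \Z_n$ there exist $y_1, y_2, y_3 \in \Z_n^*$ with $y_1 + y_2 + y_3 = y$, $g_1(y_1) g_2(y_2) g_2(y_3) > 0$, and $g_1(y_1) + g_2(y_2) + g_2(y_3) > 3/2$. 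One then applies this with $(g_1, g_2) = (f_1, f_2)$ for $i = 2$ (since $\mu_1 + 2\mu_2 = (\mu_1 + \mu_2) + \mu_2 > 5/4 + 1/4 = 3/2$) and with $(g_1, g_2) = (f_2, f_1)$ for $i = 1$ (since $\mu_2 + 2\mu_1 > 1/4 + 5/4 = 3/2$).

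\textbf{Main obstacle.} Proving the key lemma is the crux. For $n$ prime, Cauchy--Davenport yields $\mathrm{supp}(g_1) + \mathrm{supp}(g_2) + \mathrm{supp}(g_2) = \Z_n$, and a weighted averaging over the representations of $y$ extracts one with $g$-sum exceeding $3/2$. For composite $n$, Kneser's theorem gives only the weaker bound $|\mathrm{supp}(g_1) + \mathrm{supp}(g_2) + \mathrm{supp}(g_2)| \geq |\mathrm{supp}(g_1)| + 2|\mathrm{supp}(g_2)| - 2|H|$ for some stabilizer subgroup $H \leq \Z_n$, and a nontrivial $H$ could a priori allow the sumset to miss cosets of $H$. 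Ruling this out requires carefully exploiting the constraint $\mathrm{supp}(g_i) \subset \Z_n^*$, which restricts the cosets of $H$ that $g_1, g_2$ can occupy, and balancing this structural information against the density hypothesis. This interplay between Kneser's theorem and the coprimality structure of $\Z_n^*$ is precisely where the tightness of $5/8$ emerges, matching the $m = 15$ (i.e.\ $n = 5$ in the key lemma) counterexample.
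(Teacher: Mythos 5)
Your CRT reduction to the prime $3$ and the mod-$3$ pattern analysis are sound as far as they go: the base case $p=3$ checks out, and the deductions $2\mu_1+\mu_2>3/2$ and $\mu_1+2\mu_2>3/2$ from $\mu_1+\mu_2>5/4$ are correct. This is also a genuinely different organization from the paper, which isolates the modulus $15$ rather than the prime $3$ and channels everything through a quadratic intermediate inequality. However, the proposal has a real gap: essentially all of the difficulty of the proposition has been pushed into the unproven ``key lemma,'' and the strategy you sketch cannot prove it as stated. Kneser's theorem (or Cauchy--Davenport plus the coprimality structure of $\Z_n^*$) only addresses whether $\mathrm{supp}(g_1)+\mathrm{supp}(g_2)+\mathrm{supp}(g_2)$ covers $\Z_n$; it says nothing about producing a representation with $g_1(y_1)+g_2(y_2)+g_2(y_3)>3/2$, and that weighted conclusion is the whole point --- it is what ultimately feeds the mean condition $\delta_1+\delta_2+\delta_3>1$ in the transference step. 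Note also that your lemma is strictly stronger than Li--Pan's Theorem 1.2, which the paper only invokes (in Proposition \ref{prop:main2}) for functions that are strictly positive on all of $\Z_{m'}^*$; once vanishing is allowed, the threshold $\overline{g_1}+2\overline{g_2}>3/2$ is exactly critical already at $n=5$ (take $g_1\equiv 1$ and $g_2=\mathbf{1}_{\{1\}}$, for which the sumset misses a residue), so there is no slack left to absorb a lossy argument.

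The claims that the remaining cases are easy are likewise unsubstantiated. For composite $m$ with $3\nmid m$, the set $\{x\equiv 1,4\pmod 5\}$ already obstructs coverage at density $1/2$, and --- more importantly --- the weighted conclusion still requires real work: in the paper this case is exactly Proposition \ref{prop:5/8}, whose proof rests on the two sequence lemmas (Lemma \ref{lem:seq_sym} and Lemma \ref{lem:seq}), several pages of delicate case analysis; the residual interaction at the modulus $15$ is then handled by a linear program and a computer verification (Proposition \ref{prop:15} and Lemma \ref{lem:base}). A ``simple averaging argument over representations'' does not produce a triple with $f$-sum exceeding $3/2$ from a density hypothesis alone. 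To close the gap you would need to actually prove your key lemma, and the natural way to do so is to run an induction over the prime factors of $n$ that propagates a quantitative inequality such as \eqref{eq:prop5/8_conc} rather than the bare existence statement --- which is, in substance, the paper's proof.
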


In particular, we have:

\begin{cor}\label{cor:main}
Let $m$ be an odd squarefree positive integer. Let $A\subset\Z_m^*$ be a subset with $|A|>\tfrac{5}{8}\phi(m)$. Then $A+A+A=\Z_m$.
\end{cor}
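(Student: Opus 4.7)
The plan is to deduce Corollary \ref{cor:main} as an immediate specialization of Proposition \ref{prop:main}, by taking $f$ to be the indicator function of $A$. More precisely, I would set $f=1_A:\Z_m^*\to\{0,1\}\subset[0,1]$, so that
\[ \sum_{x\in\Z_m^*}f(x)=|A|>\tfrac{5}{8}\phi(m), \]
which is exactly the hypothesis required to invoke Proposition \ref{prop:main}.

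Fix any $x\in\Z_m$. The proposition produces $a_1,a_2,a_3\in\Z_m^*$ with $x=a_1+a_2+a_3$ and $f(a_1)f(a_2)f(a_3)>0$. Since $f$ takes only the values $0$ and $1$, positivity of the product forces $f(a_i)=1$ for every $i$, i.e.\ $a_i\in A$. Therefore $x\in A+A+A$. Because $x\in\Z_m$ was arbitrary, this gives $A+A+A=\Z_m$, as claimed. Note that the second conclusion $f(a_1)+f(a_2)+f(a_3)>3/2$ of Proposition \ref{prop:main} is automatic in this $\{0,1\}$-valued setting (it reads $3>3/2$) and plays no role in the corollary; its real purpose is to feed into the transference argument of Theorem \ref{thm:main}, where $f$ will genuinely take values in $[0,1]$.

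Since the entire combinatorial content is already packed into Proposition \ref{prop:main}, there is no real obstacle at the level of the corollary: the only observation worth flagging is that the proposition asserts coverage of all of $\Z_m$ (including non-units), not merely of $\Z_m^*$, which is what one needs for $A+A+A=\Z_m$.
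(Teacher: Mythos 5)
Your proposal is correct and matches the paper exactly: the corollary is stated there with ``In particular,'' meaning it is precisely the specialization of Proposition \ref{prop:main} to $f=1_A$, with $f(a_1)f(a_2)f(a_3)>0$ forcing $a_1,a_2,a_3\in A$. Nothing further is needed.
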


Note that the constant $3/2$ in Proposition \ref{prop:main} cannot
be made any larger. In fact, we may take $m=3$ and $f(1)=1$,
$f(2)=1/4$. Then for $x=2$, we must have
$\{a_1,a_2,a_3\}=\{1,2,2\}$, and thus $f(a_1)+f(a_2)+f(a_3)=3/2$.

The proof of Proposition \ref{prop:main} consists of two parts. The
first part of the proof is an inductive argument that reduces the
problem to the case $m=15$ (see Proposition \ref{prop:5/8} below).
In the second part, a robust version of Proposition \ref{prop:main}
with $m=15$ is proved (see Proposition \ref{prop:15} below). The
conclusion of Proposition \ref{prop:5/8} is tailored so that it
matches the assumption of Proposition \ref{prop:15}. In comparison,
Li and Pan used a less involved inductive argument to prove an
analogous local result (Theorem 1.2 in \cite{li2007}).

We remark that if $m$ is prime, Corollary \ref{cor:main} is then a
simple consequence of the Cauchy-Davenport-Chowla inequality, which
asserts that if $A,B,C$ are subsets of $\Z_p$ for prime $p$, then
\[ |A+B+C|\geq\min(|A|+|B|+|C|-2,p). \]
See Theorem 5.4 of \cite{tao-vu} for a proof. In the other extreme
when $m$ is highly composite, the situation is quite different, as
$|A|$ is much smaller than $m$, and thus one has to use the
structure of the set of reduced residues $\Z_m^*$ in $\Z_m$.

Under the local assumptions in Theorem \ref{thm:main2}, however, a
result such as Proposition \ref{prop:main} is much easier. We state
such a result here, to be compared with Proposition \ref{prop:main}.

\begin{prop}\label{prop:main2}
Let $m$ be an odd squarefree positive integer with $3\mid m$. Let
$f:\Z_m^*\rightarrow (0,1]$ be an arbitrary function with average
$\alpha$. For $i=1,2$, let $\alpha_i$ be the average of $f$ over
those reduced residues $r\pmod m$ with $r\equiv i\pmod 3$. Suppose
that $2\alpha_1+\alpha_2>3/2$ and $2\alpha_2+\alpha_1>3/2$. Then for
any $x\in\Z_m$, there exists $a_1,a_2,a_3\in\Z_m^*$ with
$x=a_1+a_2+a_3$, such that
\[ f(a_1)+f(a_2)+f(a_3)>\tfrac{3}{2}. \]
\end{prop}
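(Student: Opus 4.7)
The plan is to split into cases according to $x \bmod 3$. Write $m = 3m'$ with $\gcd(3, m') = 1$, and use the CRT decomposition $\Z_m^* \cong \Z_3^* \times \Z_{m'}^*$ to identify $f$ restricted to each reduced-residue class mod $3$ with functions $g, h \colon \Z_{m'}^* \to (0, 1]$ (for the classes $\equiv 1, 2 \pmod 3$ respectively) of averages $\alpha_1, \alpha_2$. In the main case $x \equiv 1 \pmod 3$, the only admissible triple type is $(R_1, R_1, R_2)$ where $R_i := \{a \in \Z_m^* : a \equiv i \pmod 3\}$, and the problem reduces to finding $b_1, b_2, b_3 \in \Z_{m'}^*$ summing to $y := x \bmod m'$ with $g(b_1) + g(b_2) + h(b_3) > 3/2$. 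The case $x \equiv 2 \pmod 3$ follows by symmetry using the dual hypothesis, and the case $x \equiv 0 \pmod 3$ uses that adding the two hypotheses gives $\alpha_1 + \alpha_2 > 1$, hence (after relabeling) $\alpha_1 > 1/2$, reducing to an analogous analysis on triples of type $(R_1, R_1, R_1)$.

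The central tool will be the principle $\max \geq \mathrm{avg}$ applied to well-chosen sub-families of valid triples. For $x \equiv 1 \pmod 3$, the two natural sub-families are the diagonal triples with $b_3 \equiv y$, forcing $b_2 = -b_1$ and yielding average $2\alpha_1 + h(y)$; and the triples with $b_1 \equiv y$ (or symmetrically $b_2 \equiv y$), forcing $b_3 = -b_2$ and yielding average $g(y) + \alpha_1 + \alpha_2$. The first average exceeds $3/2$ whenever $h(y) > 3/2 - 2\alpha_1$, the second whenever $g(y) > 3/2 - \alpha_1 - \alpha_2$, so the hypothesis $2\alpha_1 + \alpha_2 > 3/2$ handles the bulk of residues $y \in \Z_{m'}$.

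The hard part will be the adversarial regime where both sub-family averages fall short, i.e., $h(y) \leq 3/2 - 2\alpha_1$ and $g(y) \leq 3/2 - \alpha_1 - \alpha_2$ simultaneously. Combining these bounds with the hypothesis forces $2g(y) + h(y) \leq 3/2 - 2\delta$ where $\delta := 2\alpha_1 + \alpha_2 - 3/2 > 0$, so both $g(y)$ and $h(y)$ are small. In this regime I would fall back on averaging over either the full family of valid triples (whose count scales like $\phi(m')^2$, so that the main term's gap $\delta$ eventually absorbs the $O(1)$ correction from $2g(y) + h(y)$) or the restricted family of ``generic'' triples avoiding the bad residue $y$, whose average can be computed via inclusion-exclusion. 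Coordinating these strategies across all residues $y$ and all admissible profiles $(\alpha_1, \alpha_2)$ is the delicate step; strict positivity $f > 0$ is used crucially to upgrade all non-strict bounds to the strict inequality demanded by the conclusion.
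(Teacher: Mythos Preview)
Your opening reduction matches the paper exactly: write $m=3m'$, use $\Z_m^*\cong\Z_3^*\times\Z_{m'}^*$, and check case by case that for each residue of $x\bmod 3$ there are $i_1,i_2,i_3\in\{1,2\}$ summing to $x\bmod 3$ with $\alpha_{i_1}+\alpha_{i_2}+\alpha_{i_3}>3/2$. At that point the paper simply invokes Theorem~1.2 of Li--Pan \cite{li2007}, which says: for odd squarefree $m'$ and functions $f_1',f_2',f_3':\Z_{m'}^*\to[0,1]$ whose averages sum to more than $3/2$, every $x'\in\Z_{m'}$ can be written as $a_1'+a_2'+a_3'$ with $f_1'(a_1')+f_2'(a_2')+f_3'(a_3')>3/2$. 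That single citation finishes the proof.

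You instead try to establish the needed $\Z_{m'}$-statement directly by averaging over sub-families, and here there is a genuine gap. First, the two sub-families you exhibit (pinning $b_3=y$ or $b_1=y$) are only defined when $y\in\Z_{m'}^*$; for $y$ sharing a factor with $m'$ neither is available, and you never address this case. Second, even for $y\in\Z_{m'}^*$ the fallback to the full family of solutions does not close the argument. Already for $m'=p$ prime the marginal of each coordinate on $\{(b_1,b_2,b_3)\in(\F_p^*)^3:b_1+b_2+b_3=y\}$ puts extra weight on the value $y$, so the full-family average equals
\[
2\alpha_1+\alpha_2+\frac{2g(y)+h(y)-(2\alpha_1+\alpha_2)}{p^{2}-3p+3},
\]
which in the adversarial regime (small $g(y),h(y)$) can sit well below $3/2$ when $2\alpha_1+\alpha_2$ is only slightly above $3/2$ and $p$ is small. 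For instance, with $p=5$, $\alpha_1=\alpha_2=0.51$, and $g(y)=h(y)=0.01$, the full-family average is about $1.41$; a valid triple does exist here, but neither of your proposed averages detects it. For composite $m'$ these non-uniformities compound across prime factors, and you provide no mechanism to control them. Li and Pan prove their Theorem~1.2 by induction on the prime factors of $m'$; absent either that inductive argument or a direct appeal to their result, your outline does not constitute a proof.
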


\begin{proof}
Write $m=3m'$ and use the isomorphism $\Z_m\cong\Z_3\oplus\Z_{m'}$.
Suppose that $x=(i,x')$ for some $i\in\{0,1,2\}$ and $x'\in\Z_{m'}$.
A straightforward case by case analysis shows that there exists
$i_1,i_2,i_3\in\{1,2\}$ with $i\equiv i_1+i_2+i_3\pmod 3$ such that
\[ \alpha_{i_1}+\alpha_{i_2}+\alpha_{i_3}>\tfrac{3}{2}. \]
For $k\in\{1,2,3\}$, define a function $f_k':\Z_{m'}^*\rightarrow
(0,1]$ by setting $f_k'(r')=f(i_k,r')$ for each $r'\in\Z_{m'}^*$.
Since the average of $f_k'$ is $\alpha_{i_k}$, the sum of the
averages of $f_k'$ ($k=1,2,3$) is larger than $3/2$. It then follows
from Theorem 1.2 of \cite{li2007} that there exists
$a_1',a_2',a_3'\in\Z_{m'}^*$ with $x'\equiv a_1'+a_2'+a_3'\pmod{m'}$
such that
\[ f_1'(a_1')+f_2'(a_2')+f_3'(a_3')>\tfrac{3}{2}. \]
The proof is then completed by taking $a_k=(i_k,a_k')$ ($k=1,2,3$).

\end{proof}

The rest of the paper is organized as follows. Proposition
\ref{prop:main} will be proved in Section \ref{sec:loc}, using an
elementary lemma established in Section \ref{sec:lem}. Then in
Section \ref{sec:trans}, we recall the transference principle in our
setting and use it to deduce Theorem \ref{thm:main} as well as
Theorem \ref{thm:main2}.

\vspace{5 mm} \textbf{Acknowledgement.} The author would like to
express his gratitude to Ben Green for suggesting this problem to
him, to his advisor, Kannan Soundararajan, for helpful comments on
exposition, and to the anonymous referees for suggestions and
remarks which greatly improved the paper.


\section{Lemmata}\label{sec:lem}

\begin{lem}\label{lem:seq_sym}
Let $n\ge 6$ be an even positive integer.  Let $a_0 \ge a_1 \ge \ldots \ge a_{n-1}$
be a decreasing sequence of numbers in $[0,1]$, and let $A$ denote their average
$A= \frac 1n \sum_{j=0}^{n-1} a_j$.  Suppose that for all triples $(i,j,k)$ with $0\le i, j , k \le n-1$
and $i+j+k \ge n$ we have
$$
a_ia_j +a_j a_k + a_k a_i \le \tfrac 58 (a_i+a_j+a_k).
$$
Then the average value $A$ is bounded by $\frac 58$.
\end{lem}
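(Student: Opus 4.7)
I would argue by contradiction, assuming $A > 5/8$ and deriving a violation of one of the triple inequalities.

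First, the \emph{diagonal triples} $(i,i,i)$ are valid whenever $3i \ge n$, and they give $3 a_i^2 \le \tfrac{15}{8} a_i$, so $a_i \le 5/8$ for every $i \ge \lceil n/3 \rceil$. In particular, if $A > 5/8$, all of the excess mass must be concentrated on the head block of indices $i < \lceil n/3 \rceil$, on which $a_i$ can exceed $5/8$ but is still bounded by $1$.

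Second, I would use the triples $(i, n/2, n/2)$, which are always valid since $i + n/2 + n/2 \ge n$. Writing $b := a_{n/2}$, which satisfies $b \le 5/8$ by the first step, the hypothesis becomes
\[
2 a_i b + b^2 \le \tfrac{5}{8}(a_i + 2b),
\]
which for $b > 5/16$ rearranges to $a_i \le R(b)$ with $R(b) = b(\tfrac{5}{4} - b)/(2b - \tfrac{5}{8})$. A short check gives $R(5/8) = 5/8$, so in the critical case $b = 5/8$ we already have $a_i \le 5/8$ for every $i$ and the lemma follows immediately.

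For the regime $b < 5/8$, one needs to introduce asymmetric triples $(i,j,k)$ with $i+j+k = n$ that couple a small head index $i$ with middle/tail indices, combined with the $a_i \le 1$ bound on the head. The idea is to sum such pointwise constraints against carefully chosen non-negative weights and then exploit the monotonicity of $(a_i)$ (via a rearrangement or Chebyshev-type step) to obtain a quadratic inequality in $S = \sum_i a_i$ whose only feasible solution is $S \le 5n/8$, contradicting the assumption.

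The main obstacle is making this third step tight at exactly $5/8$. A naive symmetric sum over \emph{all} valid triples is too weak: for instance, one can check that a step-function such as $a_i = \mathbf{1}_{i < 5n/8}$ satisfies the averaged inequality yet violates the pointwise hypothesis, so the pointwise nature of the constraint must be exploited carefully rather than just averaged. I would expect to need a case split on the size of $b$ relative to the thresholds $5/16$ and $1/2$ (below which the bound $R(b)$ ceases to be useful), combined with a separate analysis in the extremal regime where $a_0$ is close to $1$; this is where I anticipate spending the bulk of the effort.
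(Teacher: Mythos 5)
Your first two steps are correct but they only dispose of boundary cases: the diagonal triples give $a_i\le 5/8$ for $i\ge\lceil n/3\rceil$, and the triples $(i,n/2,n/2)$ give $a_i\le R(b)$ with $b=a_{n/2}$, which closes the argument only when $b=5/8$ exactly (and is already vacuous at $b=1/2$, where $R(1/2)=1$). The entire difficulty of the lemma lives in the regime $b<5/8$ with a head block of values close to $1$, and there your third step is a plan rather than a proof: ``sum such pointwise constraints against carefully chosen non-negative weights \dots to obtain a quadratic inequality in $S$'' does not specify the family of triples, the weights, or why the resulting inequality is tight at $5/8$, and you yourself flag this as where the bulk of the effort remains. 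A crude version of your scheme does not close: bounding the head by $R(b)$ and the tail by $b$ gives an average at most $(R(b)+b)/2$, which at $b=1/2$ is $3/4>5/8$. So as written the proposal has a genuine gap at its central step.

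For comparison, the device the paper uses to make this step work is quite specific. After normalizing $x_i=\tfrac{16}{5}a_i-1$ (so the constraint becomes $x_ix_j+x_jx_k+x_kx_i\le 3$ and the goal is $X\le 1$), one sums the constraint over the $m^2$ triples $(i,j,k)$ with $i,j$ in the first half $\{0,\dots,m-1\}$, $k$ in the second half $\{m,\dots,n-1\}$, and $i+j+k\equiv 0\pmod m$ (all but $(0,0,m)$ of which satisfy $i+j+k\ge n$). Because each pair of coordinates determines the third, the sum factors exactly as $S_0^2+2S_0S_1$ where $S_0,S_1$ are the half-block sums, yielding $S_0^2+2S_0S_1\le 3m^2+x_0^2-x_m^2$. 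A two-case analysis on the sign of $S_1$ (using $S_1\ge -0.2m$ in one case, and $S_1\le mx_m$ together with the constraint at $(0,m,m)$ in the other) then contradicts $S_0+S_1>2m$. It is this congruence-structured family — chosen so that the quadratic form in the $x_i$ collapses to a quadratic in the block sums — that is missing from your outline; without it, or an equally effective substitute, the argument does not go through.
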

\begin{proof}  We make the change of variables $x_i = \frac{16}{5} a_i -1$.  Then
the variables $x_i$ form a decreasing sequence of numbers in $[-1,11/5]$.   Our condition on
the sequence $a_i$ becomes, with a little calculation:  for all triples $(i,j,k)$ with
$0\le i, j, k \le n-1$ and $i+j+k \ge n$,
\begin{equation}
\label{eq:seq_sym_1}
x_i x_j + x_j x_k + x_k x_i \le 3.
\end{equation}
If $X$ denotes the average value $\frac 1n \sum_{i=0}^{n-1} x_i = \frac{16}{5} A -1$,
then our goal is to show that $X \le 1$.  Suppose instead that $X>1$, and
so in particular $x_0 > 1$.

Write $n=2m$ so that $m\ge 3$ is an integer, and define
$$
S_0 =\sum_{i=0}^{m-1} x_i, \qquad \text{and} \qquad S_1=\sum_{i=m}^{n-1} x_i,
$$
so that $S_0+S_1 = n X> n$.  Since $S_0 \le 2.2 m$, it follows that $S_1 \ge -0.2 m$.

Let ${\mathcal M}$ denote the set of triples $(i,j,k)$ with $0\le i, j \le m-1$ and $m\le k\le n-1$ with
$i+j+k \equiv 0\pmod m$.  Given $i$ and $j$ there is a unique $k$ with $(i,j,k) \in {\mathcal M}$.
Thus $|{\mathcal M}| = m^2$, and note that all triples in $\mathcal{M}$ except for $(0,0,m)$ satisfy $i+j+k \ge 2m
=n$.  Summing the condition \eqref{eq:seq_sym_1} over all triples in ${\mathcal M}$ except
for $(0,0,m)$ we get that
$$
\sum_{(i,j,k)\in{\mathcal M}} (x_ix_j +x_jx_k +x_kx_i) - (x_0^2 +2x_0x_m) \le 3(m^2-1).
$$
If we fix two of the variables $i$, $j$, $k$ (in the appropriate intervals) then the
third is uniquely determined by the congruence condition $i+j+k\equiv 0\pmod m$.  Therefore
the sum over $(i,j,k) \in {\mathcal M}$ above equals $S_0^2 +2S_0 S_1$,
and our inequality above reads
\begin{equation}
\label{eq:seq_sym_2}
S_0^2 +2 S_0 S_1 \le 3 (m^2-1) + (x_0^2 +2x_0 x_m) \le 3m^2 + (x_0^2 -x_m^2),
\end{equation}
where the final inequality follows since $x_m^2 +2 x_0 x_m \le 3$ using
\eqref{eq:seq_sym_1} with $(m,m,0)$.

Consider first the case $S_1 \le 0$.  Since $S_1 \ge -0.2 m$ we conclude from \eqref{eq:seq_sym_2} that
$$
(S_0+S_1)^2 \le 3m^2 + S_1^2 + (x_0^2 -x_m^2) \le 3m^2 + (0.2m)^2 + (2.2)^2 < 4m^2,
$$
since $m\ge 3$.  It follows that $S_0+S_1 \le 2m$, contradicting our assumption that $X>1$.

It remains to consider the case $S_1 >0$.  We must then have $x_m \ge 0$, and note
that $S_1 \le mx_m$.  Therefore \eqref{eq:seq_sym_2} yields
$$
4m^2 < (S_0+S_1)^2 \le 3m^2 + m^2 x_m^2 + x_0^2 -x_m^2,
$$
and upon rearranging we have
\begin{equation}
\label{eq:seq_sym_3}
x_0^2 > 1+ (m^2 -1) (1-x_m^2).
\end{equation}
But the condition $x_m^2 + 2x_0 x_m \le 3$ implies that
$x_0 x_m \le 1+ (1-x_m^2)/2$, and upon squaring that $x_0^2 x_m^2 \le 1+ (1-x_m^2) + (1-x_m^2)^2/4$.  Combining this with the lower bound \eqref{eq:seq_sym_3}, and note that $x_m^2\leq 1$ by \eqref{eq:seq_sym_1} with $(m,m,m)$, we conclude that
$x_m^2 < 9/(4m^2 -3)$.  but when this is entered into \eqref{eq:seq_sym_3} we obtain a
contradiction to $x_0\le 2.2$.
\end{proof}

We remark that the constant $5/8$ appearing in the statement can be replaced by an arbitrary $\alpha\in (0,1)$, with the condition $n\geq 6$ replaced by $n\geq N(\alpha)$ for some constant $N(\alpha)$ depending on $\alpha$. For a fixed $\alpha$, one can work out exactly what $N(\alpha)$ is, by following the above argument in the proof. We will only be concerned with the case $\alpha=5/8$, though.

Note that  Lemma \ref{lem:seq_sym} fails for $n=4$. For example, we may take $a_0=1$, $a_1=0.6$, $a_2=0.5$, and $a_3=0.41$.

\begin{lem}\label{lem:seq}
Let $n\geq 10$ be an even positive integer. Let $\{a_i\},\{b_i\},\{c_i\}$ ($0\leq i<n$) be three decreasing sequences of reals in $[0,1]$. Let $A,B,C$ denote the averages of $a_i,b_i,c_i$, respectively. Suppose that for all triples $(i,j,k)$ with $0\leq i,j,k<n$ and $i+j+k\geq n$ we have
\[ a_ib_j+b_jc_k+c_ka_i\leq\tfrac{5}{8}(a_i+b_j+c_k). \]
Then
\[ AB+BC+CA\leq\tfrac{5}{8}(A+B+C). \]
\end{lem}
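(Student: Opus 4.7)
The plan is to adapt the argument of Lemma \ref{lem:seq_sym} from one sequence to three. First, apply the same linear substitution $x_i = \tfrac{16}{5} a_i - 1$, $y_i = \tfrac{16}{5} b_i - 1$, $z_i = \tfrac{16}{5} c_i - 1$, producing three decreasing sequences in $[-1, 11/5]$ with averages $X, Y, Z$. As in the symmetric case, the hypothesis becomes $x_i y_j + y_j z_k + z_k x_i \leq 3$ whenever $i+j+k \geq n$, and the desired conclusion reduces to $XY + YZ + ZX \leq 3$.

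Next, set $n = 2m$ and let $S_\epsilon, T_\epsilon, U_\epsilon$ (for $\epsilon \in \{0, 1\}$) denote the partial sums of $x_i, y_i, z_i$ over the blocks $B_0 = [0, m-1]$ and $B_1 = [m, 2m-1]$. In place of the single modular set $\mathcal{M}$ from Lemma \ref{lem:seq_sym}, consider four configurations $\mathcal{C}_1, \mathcal{C}_2, \mathcal{C}_3, \mathcal{C}_4$ of triples with $i+j+k \equiv 0 \pmod m$: in $\mathcal{C}_r$ for $r = 1, 2, 3$ exactly one specified index lies in $B_1$ and the other two in $B_0$, while $\mathcal{C}_4$ places all three indices in $B_1$. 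Each configuration contains $m^2$ triples; at most one in each of $\mathcal{C}_1, \mathcal{C}_2, \mathcal{C}_3$ fails $i+j+k \geq n$, and none in $\mathcal{C}_4$. Summing the hypothesis over the valid triples and noting that the bilinear cross-sums collapse by the identity $(S_0 + S_1)(T_0 + T_1) = n^2 XY$ and its cyclic permutations yields
\[
n^2 (XY + YZ + ZX) \leq 12 m^2 - 9 + \Xi,
\]
where $\Xi$ collects the three excluded-triple contributions and satisfies the algebraic identity $\Xi = \xi\eta + \eta\zeta + \zeta\xi - Q$ with $\xi = x_0 + x_m$, $\eta = y_0 + y_m$, $\zeta = z_0 + z_m$, and $Q = x_m y_m + y_m z_m + z_m x_m$.

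Now assume for contradiction that $XY + YZ + ZX > 3$, which forces $\Xi > 9$. The hypothesis at the diagonal triple $(m,m,m)$ gives $Q \leq 3$, and summing the hypotheses at $(0,m,m)$, $(m,0,m)$, $(m,m,0)$ (each having $i+j+k = 2m = n$) yields $\xi\eta + \eta\zeta + \zeta\xi \leq 9 + P$, where $P = x_0 y_0 + y_0 z_0 + z_0 x_0$. Substituting these two inequalities into the condition $\Xi > 9$ forces $P > 3$.

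Finally, carry out a case analysis on the signs of $S_1, T_1, U_1$ mirroring the structure of Lemma \ref{lem:seq_sym}. In the main case when all three are positive (so that $x_m, y_m, z_m > 0$), the block bound $S_1 \leq m x_m$ together with its analogues, combined with hypotheses at further extremal triples such as $(m-1, m, m)$, yields sharp lower bounds on $x_0, y_0, z_0$ in terms of $x_m, y_m, z_m$; iterating as in the symmetric proof ultimately contradicts the $L^\infty$ bound $x_0, y_0, z_0 \leq 11/5$. The main obstacle is precisely this case analysis, which is substantially more intricate than in Lemma \ref{lem:seq_sym} because we must simultaneously control the six extreme parameters $x_0, y_0, z_0, x_m, y_m, z_m$ rather than just two, and the requirement $n \geq 10$ (versus $n \geq 6$ in Lemma \ref{lem:seq_sym}) is needed to keep the resulting error terms manageable throughout this analysis.
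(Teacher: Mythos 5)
Your setup coincides exactly with the paper's: the same change of variables, the same splitting into the blocks $[0,m-1]$ and $[m,2m-1]$, and the same four families of modular triples, collapsing to an inequality of the form $n^2(XY+YZ+ZX)\leq 12m^2-9+\Xi$ with $\Xi=\xi\eta+\eta\zeta+\zeta\xi-Q$ (this is $U+V-W$ shifted by constants in the paper's notation; your bookkeeping is marginally weaker because you keep all $m^2$ triples in the all-top-block family rather than excluding $(m,m,m)$, but that is harmless). One small slip along the way: from $\Xi>9$, $Q\leq 3$ and $\xi\eta+\eta\zeta+\zeta\xi\leq 9+P$ you may conclude only $P>Q$, not $P>3$; the latter would need $Q\geq 3$, which points the wrong way.

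The genuine gap is the last paragraph. Everything up to there is routine bookkeeping; the entire substance of the lemma is the verification that $\Xi$ stays below the threshold, and you leave that as a sketch while yourself calling it ``the main obstacle.'' Moreover, the sketch as written does not describe a workable route. A hypothesis triple such as $(m-1,m,m)$ gives an \emph{upper} bound on $x_{m-1}$ when $y_m+z_m>0$, which says nothing useful about $x_0$; and the endgame of Lemma \ref{lem:seq_sym} --- squaring $x_0x_m\leq 1+(1-x_m^2)/2$ against the lower bound \eqref{eq:seq_sym_3} --- has no direct analogue once the products mix three different sequences, since the contradiction hypothesis then yields a lower bound on a symmetric combination like $\xi\eta+\eta\zeta+\zeta\xi$ rather than on the individual $x_0,y_0,z_0$. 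The paper's actual analysis must split not only on the signs of $X_1,Y_1,Z_1$ but also on the signs of their pairwise sums, and the hardest case (all pairwise sums of the top-block tails nonnegative) is handled by a different device entirely: the threshold $x_m+y_m\geq 0.55$ and the cyclically summed inequality $[x_0+y_0-8(x_m+y_m)](z_0-z_m)\leq 0$, which converts \eqref{eq:seq_3} into a bound on $x_0y_0+y_0z_0+z_0x_0$ and $W$ simultaneously. None of this falls out of ``iterating as in the symmetric proof,'' so the proof is incomplete precisely where the work lies.
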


This is simply an asymmetric version of Lemma \ref{lem:seq_sym}. It is needed to complete the induction process of proving Proposition \ref{prop:main}. The heart of the proof of Lemma \ref{lem:seq} is similar as that of Lemma \ref{lem:seq_sym}: we deduce from the hypotheses an inequality such as \eqref{eq:seq_sym_2}, and proceed from there by dividing into cases. Unfortunately, there are many more cases to consider in the asymmetric version.

\begin{proof}
As before, we make the change of variables $x_i=\frac{16}{5}a_i-1$, $y_i=\frac{16}{5}b_i-1$, and $z_i=\frac{16}{5}c_i-1$. Then $\{x_i\}$, $\{y_i\}$, and $\{z_i\}$ form decreasing sequences of reals in $[-1,2.2]$. By hypothesis, for all triples $(i,j,k)$ with $0\leq i,j,k<n$ and $i+j+k\geq n$,
\begin{equation}\label{eq:seq_1}
x_iy_j+y_jz_k+z_kx_i\leq 3.
\end{equation}
If $X,Y,Z$ denote the averages of $x_i,y_i,z_i$, respectively, then our goal becomes to show that
\[ XY+YZ+ZX\leq 3. \]

Write $n=2m$ and define
\[ X_0=\sum_{i=0}^{m-1}x_i,X_1=\sum_{i=m}^{n-1}x_i,Y_0=\sum_{i=0}^{m-1}y_i,Y_1=\sum_{i=m}^{n-1}y_i,Z_0=\sum_{i=0}^{m-1}z_i,Z_1=\sum_{i=m}^{n-1}z_i. \]
Let $\mathcal{M}$ denote the set of triples $(i,j,k)$ with $0\leq i,j\leq m-1, m\leq k\leq n-1$ and $i+j+k\equiv 0$ (mod $m$). Just as before, we sum the condition \eqref{eq:seq_1} over all triples $(i,j,k)$ in $\mathcal{M}$ except for $(0,0,m)$ to get
\[ \sum_{(i,j,k)\in\mathcal{M}}(x_iy_j+y_jz_k+z_kx_i)-(x_0y_0+y_0z_m+z_mx_0)\leq 3(m^2-1). \]
Again, if two of the variables $i,j,k$ are fixed, then the third is uniquely determined by the congruence condition $i+j+k\equiv 0$ (mod $m$). Hence the sum over $(i,j,k)\in\mathcal{M}$ above equals $X_0Y_0+Y_0Z_1+Z_1X_0$, and our inequality reads
\[ X_0Y_0+Y_0Z_1+Z_1X_0 \leq 3(m^2-1)+(x_0y_0+y_0z_m+z_mx_0). \]
Similarly, we have the other two inequalities
\[ X_0Y_1+Y_1Z_0+Z_0X_0 \leq 3(m^2-1)+(x_0y_m+y_mz_0+z_0x_0), \]
\[ X_1Y_0+Y_0Z_0+Z_0X_1 \leq 3(m^2-1)+(x_my_0+y_0z_0+z_0x_m). \]
Using the above three inequalities, it follows that
\begin{align*}
&\quad n^2(XY+YZ+ZX)  \\
&= (X_0+X_1)(Y_0+Y_1)+(Y_0+Y_1)(Z_0+Z_1)+(Z_0+Z_1)(X_0+X_1) \\
&\le 9(m^2-1)+(x_0y_0+y_0z_m+z_mx_0)+(x_0y_m+y_mz_0+z_0x_0) \\
&\quad +(x_my_0+y_0z_0+z_0x_m)+(X_1Y_1+Y_1Z_1+Z_1X_1) \\
&= 9(m^2-1)+U+V-W,
\end{align*}
where
\begin{align*}
U &=(x_0+x_m)(y_0+y_m)+(y_0+y_m)(z_0+z_m)+(z_0+z_m)(x_0+x_m), \\
V &= X_1Y_1+Y_1Z_1+Z_1X_1, \\
W &= x_my_m+y_mz_m+z_mx_m.
\end{align*}
In order to prove that $XY+YZ+ZX\leq 3$, it suffices to prove that
\begin{equation}\label{eq:seq_goal}
U+V-W\leq 3m^2+9.
\end{equation}

By summing over the condition \eqref{eq:seq_1} over all triples $(i,j,k)$ with $m\leq i,j,k<n$ and $i+j+k\equiv 0$ (mod $m$), except for $(m,m,m)$, we conclude that
\begin{equation}\label{eq:seq_2}
V-W\leq 3(m^2-1).
\end{equation}

Now we consider $U$. For convenience, write $r=x_0+x_m$, $s=y_0+y_m$, and $t=z_0+z_m$. Then $r,s,t\in [-2,4.4]$, and
\[ U=rs+st+tr. \]
Consider the three numbers $r+s$, $s+t$, and $t+r$. If at least one of them is negative, say $r+s<0$, then
\[ U=(r+s)t+rs\leq rs-2(r+s)=(r-2)(s-2)-4\leq (-4)\cdot (-4)-4=12, \]
and this gives \eqref{eq:seq_goal} upon using \eqref{eq:seq_2}. Hence we may assume that $r+s$, $s+t$, and $t+r$ are all nonnegative. This implies that $U$ is an increasing function of each of its variables $r$, $s$, and $t$.

We now bound $V$ in terms of $x_m$, $y_m$, and $z_m$. By the monotonicity of $\{x_i\}$, $\{y_i\}$, and $\{z_i\}$, together with the fact that $x_i,y_i,z_i\geq -1$, we have
\[ x_m-(m-1)\leq X_1\leq mx_m,\quad y_m-(m-1)\leq Y_1\leq my_m,\quad z_m-(m-1)\leq Z_1\leq mz_m. \]

We consider four cases, depending on the signs of $X_1$, $Y_1$, and $Z_1$.

\vspace{5 mm}
\textbf{Case (i)}: If $X_1,Y_1,Z_1<0$, then $V$ is decreasing in each of its variables $X_1,Y_1,Z_1$, and thus
\begin{align*}
V &\leq [x_m-(m-1)][y_m-(m-1)]+[y_m-(m-1)][z_m-(m-1)]+ \\
&\quad +[z_m-(m-1)][x_m-(m-1)] \\
&= 3(m-1)^2-2(m-1)(x_m+y_m+z_m)+W.
\end{align*}
We use the following trivial estimate for $U$ (recall that, under our assumption, $U$ is an increasing function of $r,s,t$):
\begin{align*}
U &\leq (2.2+x_m)(2.2+y_m)+(2.2+y_m)(2.2+z_m)+(2.2+z_m)(2.2+x_m) \\
&= 14.52+4.4(x_m+y_m+z_m)+W.
\end{align*}
Combining the above bounds for $U$ and $V$, we get
\begin{align*}
U+V-W &\leq 14.52+3(m-1)^2+(x_my_m+y_mz_m+z_mx_m)-(2m-6.4)(x_m+y_m+z_m) \\
&=14.52+3(m-1)^2-3(m-3.2)^2+(x_m-m+3.2)(y_m-m+3.2)+ \\
& +(y_m-m+3.2)(z_m-m+3.2)+(z_m-m+3.2)(x_m-m+3.2).
\end{align*}
Since $x_m-m+3.2,y_m-m+3.2,z_m-m+3.2\in [2.2-m,5.4-m]$ and $m\geq 4$, the right side above attains its maximum when $x_m=y_m=z_m=-1$. Hence,
\[ U+V-W\leq 3m^2+1.32, \]
thus confirming \eqref{eq:seq_goal}.

\vspace{5 mm}
\textbf{Case (ii)}: If exactly two of $X_1,Y_1,Z_1$ are negative, say $X_1,Y_1<0$ and $Z_1\geq 0$, then we have $V\leq X_1Y_1\leq m^2$ because $X_1,Y_1\in [-m,0)$. For $U-W$, after expanding out, we can write $U-W$ as sum of nine terms, and each term is bounded above by $2.2^2$. Hence $U-W\leq 9\cdot 2.2^2$. Combining the above bounds for $V$ and $U-W$, and recalling that $m\geq 5$, one gets \eqref{eq:seq_goal} after some simple computations.

\vspace{5 mm}
\textbf{Case (iii)}: If exactly one of $X_1,Y_1,Z_1$ is negative, say $X_1<0$ and $Y_1,Z_1\geq 0$, and either $X_1+Y_1$ or $X_1+Z_1$ is negative, say $X_1+Y_1<0$, then we have $V=(X_1+Y_1)Z_1+X_1Y_1\leq 0$. The rest follows as in Case (ii).

\vspace{5 mm}
\textbf{Case (iv)}: We are now left with the case that $X_1+Y_1$, $Y_1+Z_1$, and $X_1+Z_1$ are all non-negative. In particular, we have $x_m+y_m\geq 0$, $y_m+z_m\geq 0$, and $z_m+x_m\geq 0$. Under this assumption, $V$ is an increasing function of $X_1$, $Y_1$, and $Z_1$, and thus
\begin{equation}\label{eq:seq_4}
V\leq m^2(x_my_m+y_mz_m+z_mx_m)=m^2W.
\end{equation}
By \eqref{eq:seq_1} applied to $(0,m,m),(m,0,m),(m,m,0)$,
\[ x_0y_m+y_mz_m+z_mx_0\leq 3,\quad x_my_0+y_0z_m+z_mx_m\leq 3,\quad x_my_m+y_mz_0+z_0x_m\leq 3. \]
Summing the above three inequalities we get
\begin{equation}\label{eq:seq_3}
W+x_0y_m+x_0z_m+y_0x_m+y_0z_m+z_0x_m+z_0y_m\leq 9,
\end{equation}
and this is equivalent to
\begin{equation}\label{eq:seq_5}
U\leq 9+(x_0y_0+y_0z_0+z_0x_0).
\end{equation}

We first assume that
\[ x_m+y_m\geq 0.55,\quad y_m+z_m\geq 0.55,\quad z_m+x_m\geq 0.55. \]
This implies that $x_0+y_0-8(x_m+y_m)\leq 4.4-8\times 0.55=0$, and thus
\[ [x_0+y_0-8(x_m+y_m)](z_0-z_m)\leq 0, \]
and upon expanding we have
\[ (x_0z_0+y_0z_0)+8(x_mz_m+y_mz_m)\leq x_0z_m+y_0z_m+8(x_mz_0+y_mz_0). \]
Similarly, we have the other two inequalities
\[ (y_0x_0+z_0x_0)+8(y_mx_m+z_mx_m)\leq y_0x_m+z_0x_m+8(y_mx_0+z_mx_0), \]
\[ (z_0y_0+x_0y_0)+8(z_my_m+x_my_m)\leq z_0y_m+x_0y_m+8(z_my_0+x_my_0). \]
Suming the above three inequalities, we get
\[ 2(x_0y_0+y_0z_0+z_0x_0)+16W\leq 9(x_0y_m+x_0z_m+y_0x_m+y_0z_m+z_0x_m+z_0y_m), \]
and by \eqref{eq:seq_3}, we then have
\begin{equation}\label{eq:seq_6}
2(x_0y_0+y_0z_0+z_0x_0)+25W\leq 81.
\end{equation}
Combining \eqref{eq:seq_4}, \eqref{eq:seq_5}, and \eqref{eq:seq_6},
we get
\begin{align*}
U+V-W &\leq  9+(x_0y_0+y_0z_0+z_0x_0)+(m^2-1)W \\
&\leq 9+\tfrac{81}{2}+\left(m^2-\tfrac{27}{2}\right)W \\
&\leq 9+\tfrac{81}{2}+3\left(m^2-\tfrac{27}{2}\right),
\end{align*}
since $W\leq 3$ by \eqref{eq:seq_1} and $m\geq 5$. This gives \eqref{eq:seq_goal}.

It only remains to consider the case when at least one of $x_m+y_m$, $y_m+z_m$, and $z_m+x_m$ is less than $0.55$, say $x_m+y_m<0.55$. Since $x_m+y_m\geq 0$ under our assumption, we have $x_my_m<(0.55/2)^2$, and thus
\[ W=z_m(x_m+y_m)+x_my_m\leq 2.2\times 0.55+\left(\tfrac{0.55}{2}\right)^2<1.3. \]
Upon using \eqref{eq:seq_4} and \eqref{eq:seq_5}, we then have
\[ U+V-W\leq 9+(x_0y_0+y_0z_0+z_0x_0)+1.3(m^2-1). \]
Inserting the trivial bound $x_0y_0+y_0z_0+z_0x_0\leq 3\cdot 2.2^2$ above, we get \eqref{eq:seq_goal} when $m\geq 5$. This completes the proof.
\end{proof}

Numerical experiments show that both Lemma \ref{lem:seq_sym} and Lemma \ref{lem:seq} should be true for $n\geq 6$. However, in order to deal with the case $n=6$ in Lemma \ref{lem:seq}, the treatment of several cases seems to be more delicate. Fortunately, the combination of these two lemmas as stated would suffice for our purpose, and we thus do not pursue the optimal statement for Lemma \ref{lem:seq}.

Finally, we need a simple version of Corollay \ref{cor:main} when
$m=15$, which can be checked by a computer program.

\begin{lem}\label{lem:base}
Let $A,B,C\subset\Z_{15}^*$ be three arbitrary subsets. Suppose that
\[ |A||B|+|B||C|+|C||A|>5(|A|+|B|+|C|). \]
Then $A+B+C=\Z_{15}$.
\end{lem}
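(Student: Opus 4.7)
The plan is to verify the lemma by exhaustive finite case analysis, in line with the authors' remark that it is amenable to a computer check. Since $\Z_{15}^* = \{1,2,4,7,8,11,13,14\}$ has only $\phi(15) = 8$ elements, there are $2^8 = 256$ candidate subsets, and hence at most $256^3 \approx 1.7 \times 10^7$ triples $(A,B,C)$ to inspect, which is well within easy computational reach even with a naive implementation.

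To make the check cleaner, I would first reduce via two symmetries. Both the hypothesis and the conclusion are invariant under permutations of $(A,B,C)$, so it suffices to consider triples with $|A| \le |B| \le |C|$. Moreover, for any unit $u \in \Z_{15}^*$, the map $(A,B,C) \mapsto (uA, uB, uC)$ preserves the hypothesis (which depends only on the sizes) and sends $A+B+C$ to $u(A+B+C)$; since multiplication by $u$ permutes $\Z_{15}$, we have $A+B+C = \Z_{15}$ if and only if $uA+uB+uC = \Z_{15}$. This lets us normalize, for example by requiring $1 \in A$.

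Next I would enumerate the admissible size triples $(a,b,c) = (|A|,|B|,|C|)$ with $a \le b \le c$ satisfying $ab + bc + ca > 5(a+b+c)$. A short calculation shows that $a \ge 2$ is forced (for instance, $(1,8,8)$ yields $80 \not> 85$), while the borderline case $(5,5,5)$ is excluded by strictness ($75 \not> 75$). For each admissible size triple, I would loop through the $\binom{8}{a}\binom{8}{b}\binom{8}{c}$ configurations (further cut down by the normalization $1 \in A$), compute the sumset $A+B+C \subseteq \Z_{15}$, and confirm that all $15$ residue classes are attained.

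The main obstacle, as it were, is only to ensure the enumeration is exhaustive and bug-free; this is purely bookkeeping rather than mathematics. No deeper idea appears to be needed, which is consistent with the authors' indication that Lemma~\ref{lem:base} is best handled by direct computation.
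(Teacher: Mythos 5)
Your proposal is correct: the lemma is a finite statement, your symmetry reductions (permuting $A,B,C$, and multiplying all three sets by a unit $u\in\Z_{15}^*$, which preserves the hypothesis and carries $A+B+C$ to the permutation $u(A+B+C)$ of $\Z_{15}$) are valid, and the resulting enumeration of at most a few million triples is easily within reach; your observation that $|A|\ge 2$ is forced is also right, since for $a=1$ the condition becomes $(b-4)(c-4)>21$, impossible for $b,c\le 8$. The paper organizes the computation differently: it first invokes Corollary 2.1 of Li and Pan to dispose of all triples with $|A|+|B|+|C|>2\phi(15)=16$ (a pigeonhole-type statement needing no computation at all), which leaves only the four size triples $(3,6,7)$, $(4,5,7)$, $(4,6,6)$, $(5,5,6)$ and fewer than $2\times 10^5$ configurations to check by machine. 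So the paper buys a much smaller computer search at the cost of citing an external result, whereas your route is fully self-contained but throws more (still entirely feasible) computation at the problem; either is a legitimate proof, provided the enumeration is implemented exhaustively.
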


\begin{proof}
If $|A|+|B|+|C|>2\phi(15)=16$, then the conclusion follows by
Corollary 2.1 of \cite{li2007} (take $f_1,f_2,f_3$ there to be the
characteristic functions of $A,B,C$, respectively). Henceforth
assume that $|A|+|B|+|C|\leq 16$. Assume also that $|A|\leq |B|\leq
|C|$. There are only four possibilities for the triple
$(|A|,|B|,|C|)$:
\[ (3,6,7),\ \ (4,5,7),\ \ (4,6,6),\ \ (5,5,6). \]
These give fewer than $2\times 10^5$ possibilities for the sets
$A,B,C$. A computer program can easily do the job of checking all
possibilities.
\end{proof}


\section{Local Results}\label{sec:loc}

Proposition \ref{prop:main} follows from Proposition \ref{prop:5/8} and Proposition \ref{prop:15}.

\begin{prop}\label{prop:5/8}
Let $m$ be a squarefree positive integer with $(m,30)=1$. Let $f:\Z_m^*\rightarrow [0,1]$ be an arbitrary function with average larger than $5/8$:
\[ \sum_{x\in\Z_m^*}f(x)>\tfrac{5}{8}\phi(m). \]
Then for any $x\in\Z_m$, there exists $a,b,c\in\Z_m^*$ with $x=a+b+c$, such that
\begin{equation}\label{eq:prop5/8_conc}
f(a)f(b)+f(b)f(c)+f(c)f(a)>\tfrac{5}{8}(f(a)+f(b)+f(c)).
\end{equation}
\end{prop}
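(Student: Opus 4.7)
The plan is to prove a stronger \emph{asymmetric} version of Proposition~\ref{prop:5/8} by induction on $\omega(m)$, and recover the symmetric statement by specialization. The asymmetric version asserts: given three functions $f_1, f_2, f_3 : \Z_m^* \to [0,1]$ with averages $A_1, A_2, A_3$ satisfying $A_1 A_2 + A_2 A_3 + A_3 A_1 > \tfrac{5}{8}(A_1 + A_2 + A_3)$, for any $x \in \Z_m$ there exist $a_1, a_2, a_3 \in \Z_m^*$ with $a_1 + a_2 + a_3 = x$ such that
\[ f_1(a_1) f_2(a_2) + f_2(a_2) f_3(a_3) + f_3(a_3) f_1(a_1) > \tfrac{5}{8}\bigl(f_1(a_1) + f_2(a_2) + f_3(a_3)\bigr). \]
Proposition~\ref{prop:5/8} follows on taking $f_1 = f_2 = f_3 = f$, because $\bar f > 5/8$ implies $3\bar f^2 > \tfrac{15}{8}\bar f$. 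The base case $m = 1$ is vacuous.

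For the inductive step, pick any prime $p \mid m$ (necessarily $p \geq 7$ since $(m, 30) = 1$), write $m = p m'$, and use the CRT decomposition $\Z_m \cong \Z_p \oplus \Z_{m'}$. Suppose the conclusion fails at some $x = (x_p, x')$. For each $l$ and $r \in \Z_p^*$, put $f_{l, r}(y) = f_l(r, y)$ and let $\alpha_{l, r}$ be its average over $\Z_{m'}^*$, so that $A_l$ is the average of $\alpha_{l, r}$ over $r \in \Z_p^*$. The contrapositive of the inductive hypothesis, applied to the triple $(f_{1, r_1}, f_{2, r_2}, f_{3, r_3})$ at the point $x'$, yields for every $(r_1, r_2, r_3) \in (\Z_p^*)^3$ with $r_1 + r_2 + r_3 \equiv x_p \pmod p$,
\[ \alpha_{1, r_1} \alpha_{2, r_2} + \alpha_{2, r_2} \alpha_{3, r_3} + \alpha_{3, r_3} \alpha_{1, r_1} \leq \tfrac{5}{8}\bigl(\alpha_{1, r_1} + \alpha_{2, r_2} + \alpha_{3, r_3}\bigr). \]
The inductive step is complete once I deduce from this system that $A_1 A_2 + A_2 A_3 + A_3 A_1 \leq \tfrac{5}{8}(A_1 + A_2 + A_3)$, contradicting the asymmetric hypothesis.

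To effect this deduction, I would sort $\{\alpha_{l, r}\}_{r \in \Z_p^*}$ decreasingly for each $l$, producing three decreasing sequences of length $n = p - 1 \geq 6$ in $[0,1]$ with averages $A_l$, and feed them to Lemma~\ref{lem:seq} (when $p \geq 11$, so $n \geq 10$) or to Lemma~\ref{lem:seq_sym} (in the symmetric case with $p = 7$). The hypothesis of those lemmas demands the $5/8$-inequality at every sorted index triple $(i, j, k)$ with $i + j + k \geq n$, for which I would invoke the Cauchy-Davenport-Chowla inequality: the three top subsets of $\Z_p^*$ of sizes $i+1, j+1, k+1$ have sumset of size at least $i + j + k + 1 \geq p$, hence cover $\Z_p$ and in particular contain a triple of residues summing to $x_p$, to which the per-triple inequality applies. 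The main obstacle is the last transfer, from these witnessing residues---whose $\alpha$-values are only guaranteed to be at least the target sorted values---back to the target values themselves, since $g(x, y, z) = xy + yz + zx - \tfrac{5}{8}(x + y + z)$ is linear in each variable but not monotone in a uniform direction. I anticipate handling this by paralleling the $\mathcal{M}$-summation technique in the proof of Lemma~\ref{lem:seq_sym}---summing the per-triple inequalities over all $(r_1, r_2, r_3) \in (\Z_p^*)^3$ with $r_1 + r_2 + r_3 \equiv x_p \pmod p$ after splitting $\Z_p^*$ into top and bottom halves by $\alpha$-value---rather than invoking Lemma~\ref{lem:seq} purely as a black box; the edge case $p = 7$ in the asymmetric setting may require arranging the induction to split off a prime $\geq 11$ whenever available and treating the residual case $m = 7$ directly.
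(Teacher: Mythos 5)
Your route is genuinely different from the paper's: you induct on an \emph{asymmetric} statement (three functions, hypothesis $A_1A_2+A_2A_3+A_3A_1>\tfrac58(A_1+A_2+A_3)$) and argue by contrapositive, so that the per-triple inequalities are available at \emph{every} admissible residue triple $(r_1,r_2,r_3)$ before you ever sort. The paper instead inducts on the symmetric statement for a single function, replacing $f$ on $\Z_m^*$ by its fiber-average $f'$ on $\Z_{m'}^*$ (again a single function); asymmetry enters only \emph{inside} one inductive step, via the three fibers over the three points produced by the induction hypothesis. One remark in your favor: the step you flag as the ``main obstacle'' --- transferring the inequality from the witnessing residues (whose $\alpha$-values dominate the sorted targets) back to the sorted targets --- is not actually an obstacle, and you do not need to re-run the $\mathcal{M}$-summation. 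The paper's monotonicity observation disposes of it: if $g(x,y,z)=xy+yz+zx-\tfrac58(x+y+z)$ were positive at the smaller (target) point, then all three pairwise sums there would be at least $\tfrac58$, $g$ would be increasing on the box from target to witness, and $g$ would be positive at the witness too, contradicting the per-triple inequality. So Lemma~\ref{lem:seq} really can be used as a black box here.

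The genuine gap is at the prime $7$. Because your inducted statement is asymmetric, after even one reduction the three functions on $\Z_{m'}^*$ are the distinct fibers $f_{1,r_1},f_{2,r_2},f_{3,r_3}$, and they stay distinct all the way down. Consequently, whenever the factor $7$ is dealt with below the top level --- in particular in your proposed arrangement, which splits off primes $\geq 11$ first and leaves a residual case $m=7$ with three distinct functions --- you need precisely the asymmetric Lemma~\ref{lem:seq} with $n=6$ (equivalently, the asymmetric local statement on $\Z_7$ for three distinct functions). That lemma is only proved for $n\geq 10$, and the paper explicitly remarks that extending Lemma~\ref{lem:seq} to $n=6$ ``seems to be more delicate'' and deliberately avoids it. ``Treating the residual case $m=7$ directly'' is therefore not a routine check: it is an 18-real-variable optimization problem, not a finite case analysis like Lemma~\ref{lem:base}. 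The paper's design dodges this exactly because its base case $m=p$ (which may be $7$) involves a \emph{single} function, so the symmetric Lemma~\ref{lem:seq_sym} with $n=6$ suffices, while every use of the asymmetric Lemma~\ref{lem:seq} occurs with $p\geq 11$ (available since a composite squarefree $m$ coprime to $30$ has a prime factor $\geq 11$). If you want to salvage your architecture, you must invert your stated order: peel off $7$ at the very top level, where $f_1=f_2=f_3$ and the three sorted sequences coincide so that Lemma~\ref{lem:seq_sym} applies with $n=6$, and reserve the asymmetric induction for the part of $m$ coprime to $7$ --- at which point you have essentially reconstructed the paper's proof.
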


Note that the conclusion \eqref{eq:prop5/8_conc} above implies that
$f(a)f(b)f(c)>0$: if $f(a)=0$, for example, then
$\eqref{eq:prop5/8_conc}$ gives
\[ f(b)f(c)>\tfrac{5}{8}(f(b)+f(c)). \]
This is impossible since $0\leq f(b),f(c)\leq 1$.

\begin{proof}
We proceed by induction. First consider the base case when $m=p\geq 7$ is prime. Let $a_0\geq a_1\geq\cdots\geq a_{p-2}$ be the $p-1$ values of $f(x)$ ($x\in\F_p^*$) in decreasing order. The average of the sequence $\{a_i\}$ is larger than $5/8$ by hypothesis. By Lemma \ref{lem:seq_sym} applied to the sequence $\{a_i\}$ of length $p-1\geq 6$, we conclude that there exists $0\leq i,j,k\leq p-2$ with $i+j+k\geq p-1$ such that
\begin{equation}\label{eq:5/8_2}
a_ia_j+a_ja_k+a_ka_i>\tfrac{5}{8}(a_i+a_j+a_k).
\end{equation}
Define $I,J,K\subset\F_p^*$ by
\[ I=\{x:f(x)\geq a_i\},\quad J=\{x:f(x)\geq a_j\},\quad K=\{x:f(x)\geq a_k\}. \]
By the monotonicity of the sequence $\{a_i\}$,
\[ |I|+|J|+|K|\geq (i+1)+(j+1)+(k+1)\geq p+2, \]
and thus by the Cauchy-Davenport-Chowla theorem,
\[ I+J+K=\F_p. \]
For any $x\in\F_p$, we can thus find $u\in I,v\in J,w\in K$ with $x=u+v+w$. By the definitino of $I,J,K$, we have
\[ f(u)\geq a_i,\quad f(v)\geq a_j,\quad f(w)\geq a_k. \]
Observe that the function $h(x,y,z)=xy+yz+zx-\tfrac{5}{8}(x+y+z)$ is
increasing in each variable in the region
\[ \{0\leq x,y,z\leq
1:x+y\geq\tfrac{5}{8},y+z\geq\tfrac{5}{8},z+x\geq\tfrac{5}{8}\}. \]
Note that $\eqref{eq:5/8_2}$ implies $a_i+a_j\geq\tfrac{5}{8}$. This
is because, if $a_i+a_j<\tfrac{5}{8}$, then
$a_ia_k+a_ja_k\leq\tfrac{5}{8}a_k$, and thus
$a_ia_j>\tfrac{5}{8}(a_i+a_j)$, which is impossible as $0\leq
a_i,a_j\leq 1$. Similarly, $a_j+a_k\geq\tfrac{5}{8}$ and
$a_k+a_i\geq\tfrac{5}{8}$. The monotonicity of $h(x,y,z)$ together
with \eqref{eq:5/8_2} then implies
\[ f(u)f(v)+f(v)f(w)+f(w)f(u)>\tfrac{5}{8}(f(u)+f(v)+f(w)). \]

We now assume that $m$ is composite and write $m=m'p$ with $p\geq 11$. We use the canonical isomorphism $\Z_m\cong\Z_{m'}\times\F_p$ to define $f':\Z_{m'}^*\rightarrow [0,1]$ by
\[ f'(x)=\frac{1}{p-1}\sum_{y\in\F_p^*}f(x,y). \]
Then by induction hypothesis, for any $x\in\Z_{m'}$, there exists $a,b,c\in\Z_{m'}^*$ with $x=a+b+c$, such that
\[ f'(a)f'(b)+f'(b)f'(c)+f'(c)f'(a)>\tfrac{5}{8}(f'(a)+f'(b)+f'(c)). \]
Let $a_0\geq a_1\geq\cdots\geq a_{p-2}$ be the $p-1$ values of $f(a,x)$ ($x\in\F_p^*$) in decreasing order. Define similarly the sequences $\{b_i\}$ and $\{c_i\}$. The averages of the sequences $\{a_i\},\{b_i\},\{c_i\}$ are $f'(a),f'(b),f'(c)$, respectively. Hence by Lemma \ref{lem:seq} applied to the sequences $\{a_i\}$, $\{b_i\}$, and $\{c_i\}$ of length $p-1\geq 10$, we conclude that there exists $0\leq i,j,k\leq p-2$ with $i+j+k\geq p-1$, such that
\begin{equation}\label{eq:5/8_1}
a_ib_j+b_jc_k+c_ka_i>\tfrac{5}{8}(a_i+b_j+c_k).
\end{equation}
Define $I,J,K\subset\F_p^*$ by
\[ I=\{x:f(a,x)\geq a_i\},\quad J=\{x:f(b,x)\geq b_j\},\quad K=\{x:f(c,x)\geq c_k\}. \]
By the monotonicity of the sequences $\{a_i\}$, $\{b_i\}$, and $\{c_i\}$,
\[ |I|+|J|+|K|\geq (i+1)+(j+1)+(k+1)\geq p+2, \]
and thus by the Cauchy-Davenport-Chowla theorem,
\[ I+J+K=\F_p. \]
For any $y\in\F_p$, we can thus find $u\in I,v\in J,w\in K$ with $y=u+v+w$. By the definitions of $I,J,K$, we have
\[ f(a,u)\geq a_i,\quad f(b,v)\geq b_j,\quad f(c,w)\geq c_k. \]
It then follows easily from \eqref{eq:5/8_1} that
\[ f(a,u)f(b,v)+f(b,v)f(c,w)+f(c,w)f(a,u)>\tfrac{5}{8}(f(a,u)+f(b,v)+f(c,w)). \]
This completes the induction step.
\end{proof}

\begin{prop}\label{prop:15}
Let $f_1,f_2,f_3:\Z_{15}^*\rightarrow [0,1]$ be arbitrary functions. Let
\[ F_i=\sum_{x\in\Z_{15}^*}f_i(x). \]
Suppose that
\begin{equation} \label{eq:prop15_condition}
F_1F_2+F_2F_3+F_3F_1>5(F_1+F_2+F_3).
\end{equation}
Then for any $x\in\Z_{15}$, there exists $a_1,a_2,a_3\in\Z_{15}^*$
with $x=a_1+a_2+a_3$, such that
\[ f_1(a_1)f_2(a_2)f_3(a_3)>0,\quad f_1(a_1)+f_2(a_2)+f_3(a_3)>\tfrac{3}{2}. \]
\end{prop}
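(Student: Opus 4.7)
The plan is to reduce Proposition \ref{prop:15} to Lemma \ref{lem:base} via a layer-cake argument on the functions $f_i$. For each $i \in \{1,2,3\}$ and each $t \in (0,1]$, set
\[ A_i(t) := \{a \in \Z_{15}^* : f_i(a) \geq t\}, \qquad \phi_i(t) := |A_i(t)|, \]
so that $\phi_i$ is a non-increasing step function on $(0,1]$ satisfying $F_i = \int_0^1 \phi_i(t)\,dt$. Fix $x \in \Z_{15}$. The key reduction is the following: if I can exhibit thresholds $t_1, t_2, t_3 \in (0,1]$ with
\[ t_1 + t_2 + t_3 > \tfrac{3}{2} \quad\text{and}\quad \phi_1(t_1)\phi_2(t_2) + \phi_2(t_2)\phi_3(t_3) + \phi_3(t_3)\phi_1(t_1) > 5\bigl(\phi_1(t_1) + \phi_2(t_2) + \phi_3(t_3)\bigr), \]
then Lemma \ref{lem:base} forces $A_1(t_1) + A_2(t_2) + A_3(t_3) = \Z_{15}$, so one can write $x = a_1 + a_2 + a_3$ with $a_i \in A_i(t_i)$; by construction $f_i(a_i) \geq t_i > 0$ and $\sum_i f_i(a_i) \geq t_1 + t_2 + t_3 > 3/2$, exactly as required.

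To produce such thresholds I would use the layer-cake identity $F_i F_j = \int_{[0,1]^2} \phi_i(t_i)\phi_j(t_j)\,dt_i\,dt_j$ to rewrite the hypothesis \eqref{eq:prop15_condition} as
\[ \int_{[0,1]^3} H(t_1, t_2, t_3)\, dt_1\, dt_2\, dt_3 > 0, \qquad H(t) := \sum_{\mathrm{cyc}} \phi_i(t_i)\phi_j(t_j) - 5\sum_{i=1}^{3} \phi_i(t_i), \]
and aim to show that $H$ must be strictly positive at some point of the region $Q := \{(t_1, t_2, t_3) \in (0,1]^3 : t_1 + t_2 + t_3 > 3/2\}$. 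The technical tools would be the monotonicity of each $\phi_i$ and the pointwise bounds $\phi_i(t) \leq \min(8, F_i/t)$ and $\phi_i(t) \geq (F_i - 8t)/(1-t)$, both of which follow from $\phi_i \leq 8$ and $\int_0^1 \phi_i = F_i$.

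The hard step is precisely showing that the positive mass of the full-cube integral of $H$ cannot be absorbed entirely in the complementary region $Q^c = \{t_1 + t_2 + t_3 < 3/2\}$: there, small $t_i$'s allow $\phi_i(t_i)$ to be large and the quadratic terms $\phi_i\phi_j$ can dominate. I expect to handle this either by a weighted integration that suppresses $Q^c$ (for instance weighting by $(t_1+t_2+t_3 - \tfrac{3}{2})_+$) combined with identities involving the second moments $\int_0^1 t\,\phi_i(t)\,dt = \tfrac{1}{2}\|f_i\|_2^2$, or alternatively by discretizing: each $\phi_i$ takes only finitely many values (at most nine, since $|\Z_{15}^*|=8$), so one only needs to check the finite family of threshold triples obtained at the jumps of the $\phi_i$, where the Lemma \ref{lem:base} hypothesis becomes an integer inequality on $(R_1, R_2, R_3) = (|A_1(t_1)|, |A_2(t_2)|, |A_3(t_3)|) \in \{1,\ldots,8\}^3$, and elementary lower bounds on the $R$-th largest value of $f_i$ (namely $(F_i - R)/(8-R)$) should pick out a triple satisfying the sum condition.
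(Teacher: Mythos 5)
Your reduction is exactly the one the paper uses: find thresholds $t_1,t_2,t_3$ with $t_1+t_2+t_3>\tfrac32$ whose superlevel sets $A_i(t_i)$ satisfy the hypothesis of Lemma \ref{lem:base}, then read off $a_i\in A_i(t_i)$ with $f_i(a_i)\ge t_i>0$. (In the paper's notation the thresholds are the order statistics $y_{ij_i}$ and the condition on the counts $\phi_i(t_i)$ is the condition defining the set $J$.) The problem is that everything after this reduction --- which is the entire content of the proposition --- is left as a sketch, and the one concrete fallback you offer does not work. Checking each admissible count-triple $(R_1,R_2,R_3)$ against the \emph{individual} worst-case lower bound on the $R$-th largest value of $f_i$ (correctly, $y_{iR}\ge (F_i-R+1)/(9-R)$, attained when the top $R-1$ values equal $1$ and the rest are equal) fails already in the symmetric case $F_1=F_2=F_3=5.1$, where \eqref{eq:prop15_condition} holds ($78.03>76.5$). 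There the admissible triples force at least one large coordinate: one computes that the maximum of $\sum_i (F_i-R_i+1)/(9-R_i)$ over all triples with $R_1R_2+R_2R_3+R_3R_1>5(R_1+R_2+R_3)$ is about $0.695$, attained at $(4,5,8)$ or $(4,5,7)$ --- nowhere near $\tfrac32$. The point is that the worst-case configurations for different $R$ are mutually inconsistent, so the constraints must be exploited \emph{jointly}, not one triple at a time. Your other suggestion (a weighted integral of $H$ against $(t_1+t_2+t_3-\tfrac32)_+$ plus second-moment identities) is not carried out at all, and it is not clear it can be made to work.

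For comparison, the paper handles this core step by taking the contrapositive: it assumes $y_{1j_1}+y_{2j_2}+y_{3j_3}\le\tfrac32$ for \emph{all} triples in $J$ simultaneously, treats this as a linear program in the $8+8+8$ values $y_{ij}$ for each of the $34$ possible support-size triples $(n_1,n_2,n_3)$, and shows the LP maximum of $F_1+F_2+F_3$ is at most $15$, contradicting \eqref{eq:prop15_condition} via $F_1F_2+F_2F_3+F_3F_1\le\tfrac13(F_1+F_2+F_3)^2$. Even this is not enough in six exceptional cases such as $(2,8,8)$ and $(4,5,8)$, where the LP only gives $15.5$ or $16$ and one must additionally use the bounds $F_i\le n_i$ and a further split into subcases. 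So the step you describe as ``the hard step'' genuinely is the whole proof, it requires a joint optimization rather than pointwise bounds, and it contains delicate boundary cases that your sketch gives no mechanism to detect or resolve.
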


\begin{proof}
Our proof is based on case by case analysis. Let $A_i$ be the
support of $f_i$ ($A_i=\{x\in\Z_{15}^*:f_i(x)>0\}$) and write
$n_i=|A_i|$. Without loss of generality, we may assume that $n_1\leq
n_2\leq n_3$. Since $n_i\geq F_i$, we have by
\eqref{eq:prop15_condition},
\begin{equation}\label{eq:prop15_1}
n_1n_2+n_2n_3+n_3n_1>5(n_1+n_2+n_3).
\end{equation}
Let $M$ be the set of all possible $(n_1,n_2,n_3)$ satisfying
(\ref{eq:prop15_1}). Since $n_1,n_2,n_3\in\{0,1,\ldots,8\}$, it
turns out that $|M|=34$. Fix some $(n_1,n_2,n_3)\in M$. For
$i\in\{1,2,3\}$, write $A_i=\{x_{i1},x_{i2},\cdots,x_{in_i}\}$. Let
$y_{ij}=f(x_{ij})$ ($1\leq j\leq n_i$). Without loss of generality,
assume that $y_{i1}\geq y_{i2}\geq\cdots\geq y_{in_i}$.

Let $J$ be the set of all triples $(j_1,j_2,j_3)$ satisfying
\begin{equation}
\label{eq:prop15_2} 1\leq j_i\leq n_i,\quad
j_1j_2+j_2j_3+j_3j_1>5(j_1+j_2+j_3).
\end{equation}
First assume that $y_{1j_1}+y_{2j_2}+y_{3j_3}>3/2$ for some
$(j_1,j_2,j_3)\in J$. Then by Lemma \ref{lem:base} applied to the
three sets $\{x_{i1},\cdots,x_{ij_i}\}$ ($i=1,2,3$), there exists
$a_i\in\{x_{i1},\cdots,x_{ij_i}\}$ such that $x=a_1+a_2+a_3$. The
proof is then complete because \[ f_1(a_1)+f_2(a_2)+f_3(a_3)\geq
y_{1j_1}+y_{2j_2}+y_{3j_3}>\tfrac{3}{2}. \]

Henceforth assume that
\begin{equation}\label{eq:prop15_3}
y_{1j_1}+y_{2j_2}+y_{3j_3}\leq\tfrac{3}{2}
\end{equation}
for all $(j_1,j_2,j_3)\in J$. Set up an optimization problem with
variables $y_{ij}\in [0,1]$ ($1\leq j\leq n_i$) and constraints
(\ref{eq:prop15_3}). Our objective is to maximize the sum of all
variables:
\[ S=\sum_{i=1}^3\sum_{j=1}^{n_i}y_{ij}=F_1+F_2+F_3. \]
The constraints and the objective function in this optimization problem are all linear, and the maximum of $S$ can be found using a linear programming algorithm. Our conclusion is the following:
\[ F_1+F_2+F_3\leq\begin{cases} 16 & (n_1,n_2,n_3)=(2,8,8), \\ 15.5 & (n_1,n_2,n_3)\in M_1, \\ 15 & \text{otherwise,} \end{cases} \]
where
\[ M_1=\{(2,7,8),(3,6,8),(3,8,8),(4,5,8),(4,8,8)\}. \]
Hence, for $(n_1,n_2,n_3)\notin M_1$ and $(n_1,n_2,n_3)\neq
(2,8,8)$, we have $F_1+F_2+F_3\leq 15$, from which one easily gets a
contradiction with (\ref{eq:prop15_condition}):
\begin{equation}\label{eq:prop15_contra}
F_1F_2+F_2F_3+F_3F_1\leq\tfrac{1}{3}(F_1+F_2+F_3)^2\leq 5(F_1+F_2+F_3).
\end{equation}

Now we consider the remaining cases. For notational convenience, we shall write
\[ T(x,y,z)=xy+yz+zx-5(x+y+z). \]
By \eqref{eq:prop15_condition}, $T(F_1,F_2,F_3)>0$, and it is easy to see that $T(x,y,z)$ is an increasing function of $x,y,z$ in the range $x\geq F_1$, $y\geq F_2$, $z\geq F_3$. We aim for a contradiction in each case.

For $(n_1,n_2,n_3)\in\{(2,7,8),(2,8,8)\}$, we have $F_1+F_2+F_3\leq
16$ and $F_1\leq 2$, and thus
\[ T(F_1,F_2,F_3)\leq T(2,7,7)<0. \]

For $(n_1,n_2,n_3)\in\{(3,6,8),(3,8,8)\}$, we have $F_1+F_2+F_3\leq
15.5$ and $F_1\leq 3$, and thus
\[ T(F_1,F_2,F_3)\leq T(3,6.25,6.25)<0. \]

For $(n_1,n_2,n_3)=(4,5,8)$, we divide into two possibilities:
$F_1+F_2\geq 8$ and $F_1+F_2\leq 8$. If $F_1+F_2\geq 8$, we solve
the optimization problem as before, but with the additional
constraint given by $F_1+F_2\geq 8$:
\[ \sum_{i=1}^2\sum_{j=1}^{n_i}y_{ij}\geq 8, \]
to conclude that $S\leq 15$, and we are done by \eqref{eq:prop15_contra}. If $F_1+F_2\leq 8$, we then have
\[ T(F_1,F_2,F_3)\leq T(4,4,7.5)<0. \]

Finally, for $(n_1,n_2,n_3)=(4,8,8)$, we again divide into two
possibilities: $F_1\geq 3$ and $F_1\leq 3$. If $F_1\geq 3$, we solve
the optimization problem as before, but with the additional
constraint given by $F_1\geq 3$:
\[ \sum_{j=1}^{n_1}y_{1j}\geq 3, \]
to conclude that $S\leq 15$, and we are done by \eqref{eq:prop15_contra}. If $F_1\leq 3$, then, as in the case $(a_1,a_2,a_3)\in\{(3,6,8),(3,8,8)\}$, we have
\[ T(F_1,F_2,F_3)\leq T(3,6.25,6.25)<0. \]
This completes the proof.
\end{proof}

\begin{proof}[Proof of Proposition \ref{prop:main}]
First note that if the result holds for $m$, then it also holds for any $m'$ dividing $m$. Hence we may assume that $15|m$. Write $m=15m'$, where $(m',30)=1$. We shall use the canonical isomorphism $\Z_m\cong\Z_{m'}\times\Z_{15}$. Let $(u,v)\in\Z_m$ be arbitrary ($u\in\Z_{m'}$, $v\in\Z_{15}$). Define $f':\Z_{m'}\rightarrow [0,1]$ by
\[ f'(x)=\frac{1}{\phi(15)}\sum_{y\in\Z_{15}^*}f(x,y). \]

Since the average of $f$ is larger than $5/8$, so is the average of $f'$. We may apply Proposition \ref{prop:5/8} to $f'$, concluding that there exists $a_1,a_2,a_3\in\Z_{m'}^*$ with $u=a_1+a_2+a_3$, such that
\begin{equation}\label{eq:prop_main_1}
f'(a_1)f'(a_2)+f'(a_2)f'(a_3)+f'(a_3)f'(a_1)>\tfrac{5}{8}(f'(a_1)+f'(a_2)+f'(a_3)).
\end{equation}
Now define $f_i:\Z_{15}^*\rightarrow [0,1]$ ($i=1,2,3$) by
\[ f_i(y)=f(a_i,y). \]
Note that (\ref{eq:prop_main_1}) implies that the hypothesis of Proposition \ref{prop:15} holds for $f_1,f_2,f_3$, and we can conclude that there exists $b_1,b_2,b_3\in\Z_{15}^*$ with $v=b_1+b_2+b_3$, such that
\[ f_i(b_i)>0,\quad f_1(b_1)+f_2(b_2)+f_3(b_3)>\tfrac{3}{2}. \]
Equivalently, we have $(u,v)=(a_1,b_1)+(a_2,b_2)+(a_3,b_3)$, and
\[ f(a_i,b_i)>0,\quad f(a_1,b_1)+f(a_2,b_2)+f(a_3,b_3)>\tfrac{3}{2}. \]
This completes the proof.
\end{proof}


\section{Proof of Theorem \ref{thm:main} and Theorem \ref{thm:main2}}\label{sec:trans}

\subsection{The transference principle}

In this section, we work in $\Z_N=\Z/N\Z$. For a function $f:\Z_N\rightarrow\C$, its Fourier transform is defined by
\[ \hat{f}(r)=\sum_{x\in\Z_N}f(x)e_N(rx),\quad r\in\Z_N, \]
where $e_N(y)=\exp(2\pi iy/N)$ as usual.

The convolution of two functions $f,g:\Z_N\rightarrow\C$ is defined by
\[ f*g(x)=\sum_{y\in\Z_N}f(y)g(x-y) \]
for $x\in\Z_N$.

\begin{prop}\label{prop:trans}
Let $N$ be a sufficiently large prime. Suppose that $\mu_i:\Z_N\rightarrow\R^+$ and $a_i:\Z_N\rightarrow\R^+$ ($i=1,2,3$) are functions satisfying the majorization condition
\begin{equation}\label{eq:major}
0\leq a_i(n)\leq\mu_i(n),
\end{equation}
and the mean condition
\begin{equation}\label{eq:mean}
\min(\delta_1,\delta_2,\delta_3,\delta_1+\delta_2+\delta_3-1)\geq\delta
\end{equation}
for some $\delta>0$, where $\delta_i=\sum_{x\in\Z_N}a_i(x)$. Suppose
that $\mu_i$ and $a_i$ also satisfiy the pseudorandomness conditions
\begin{equation}\label{eq:random_mu}
|\hat{\mu_i}(r)-\delta_{r,0}|\leq\eta
\end{equation}
for all $r\in\Z_N$, where $\delta_{r,0}$ is the Kronecker delta, and
\begin{equation}\label{eq:random_a}
\|\hat{a_i}\|_q=\left(\sum_{r\in\Z_N}|\hat{a_i}(r)|^q\right)^{1/q}\leq
M
\end{equation}
for some $2<q<3$ and $\eta,M>0$. Then for any $x\in\Z_N$, we have
\begin{equation}\label{eq:conclusion}
\sum_{y,z\in\Z_N}a_1(y)a_2(z)a_3(x-y-z)\geq \frac{c(\delta)}{N}
\end{equation}
for some constant $c(\delta)>0$ depending only on $\delta$, provided
that $\eta\leq\eta(\delta,M,q)$ is small enough.
\end{prop}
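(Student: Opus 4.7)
The plan is to deploy the standard transference principle of Green--Tao in the form used by Li and Pan (Section~3 of \cite{li2007}). The proof will have three ingredients: a dense-model step, a counting lemma for bounded functions, and an error-transfer step via the restriction estimate \eqref{eq:random_a}.

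First, I would construct dense models. The hypothesis \eqref{eq:random_mu} says each $\mu_i$ is Fourier-close to the constant function $1/N$. From this I would produce, for each $i$, a function $\tilde a_i : \Z_N \to [0, (1+o(1))/N]$ with $\sum_x \tilde a_i(x) = \delta_i$ and $\|\hat a_i - \hat{\tilde a}_i\|_\infty \le \epsilon$, where $\epsilon = \epsilon(\eta) \to 0$ as $\eta \to 0$. This is a Fourier-only variant of the Green--Tao dense-model theorem; a Hahn--Banach-style construction tailored to this precise setup is essentially the content of Proposition~3.1 of \cite{li2007}.

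Next, I would verify the pointwise lower bound $(\tilde a_1 * \tilde a_2 * \tilde a_3)(x) \ge c(\delta)/N$ for every $x \in \Z_N$. After rescaling, $N\tilde a_i$ takes values in $[0,1]$ with mean $\delta_i$, and \eqref{eq:mean} gives $\delta_1 + \delta_2 + \delta_3 \ge 1 + \delta$ together with $\delta_i \ge \delta$. Setting $\theta = \delta/10$ and $B_i = \{y : \tilde a_i(y) \ge \theta/N\}$, the mean bound forces $|B_i| \ge (\delta_i - \theta) N$, so $|B_1| + |B_2| + |B_3| > N + 2$ when $N$ is large enough. A quantitative Cauchy--Davenport--Chowla inequality (for instance Pollard's theorem) then yields that the number of representations $x = y + z + w$ with $(y, z, w) \in B_1 \times B_2 \times B_3$ is $\gg_\delta N^2$, and each such triple contributes at least $(\theta/N)^3$ to the convolution, giving the required bound $c(\delta)/N$.

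Finally, I would transfer this back to the original $a_i$. Writing $T(f,g,h)(x) = (f*g*h)(x)$, the telescoping identity
$$T(a_1, a_2, a_3) - T(\tilde a_1, \tilde a_2, \tilde a_3) = T(a_1 - \tilde a_1, a_2, a_3) + T(\tilde a_1, a_2 - \tilde a_2, a_3) + T(\tilde a_1, \tilde a_2, a_3 - \tilde a_3)$$
decomposes the error, and by Fourier inversion each term equals $\frac{1}{N}\sum_r \hat f(r)\hat g(r)\hat h(r) e_N(-rx)$. Hölder in the Fourier variable with exponents $(q/(q-2),\, q,\, q)$, combined with $L^\infty$--$L^q$ interpolation applied to $\hat a_1 - \hat{\tilde a}_1$ using $\|\hat a_1 - \hat{\tilde a}_1\|_\infty \le \epsilon$ and $\|\hat a_1 - \hat{\tilde a}_1\|_q \ll M$ (by triangle inequality on \eqref{eq:random_a}), yields that each error term is $\ll \epsilon^{3 - q} M^q / N$. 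Choosing $\eta$ small enough that $\epsilon^{3 - q} M^q \le c(\delta)/6$ then closes the argument. The main obstacle is Step~1: constructing the dense model with a quantitatively useful dependence $\epsilon(\eta)$ is what couples the choice of $\eta$ to the final constant $c(\delta)$; Steps~2 and~3 are comparatively mechanical once the dense model is in hand.
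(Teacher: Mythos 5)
Your proposal is correct and follows essentially the same route as the paper: a dense-model decomposition (the paper realizes it concretely as $a_i' = a_i * \beta * \beta$ with $\beta$ a normalized Bohr set, citing Proposition 5.1 of \cite{green2006}, which also immediately gives the $\ell^q$ bound on $\hat{a}_i - \hat{a}_i'$ that you obtain by triangle inequality), a level-set plus quantitative Cauchy--Davenport argument for the main term (Lemma 3.3 of \cite{li2007}), and a H\"older/$L^\infty$--$L^q$ interpolation bound of the form $M^q\epsilon^{3-q}/N$ for the error terms. The only cosmetic difference is your three-term telescoping versus the paper's seven-term expansion.
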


We first outline the proof. The first ingredient is to write $a_i$ as the sum $a_i'+a_i''$, where the function $a_i'$ is set-like, in the sense that it resembles the normalized characteristic function of a dense subset of $\Z_N$ (with density $\delta_i$), and the function $f_i''$ is uniform, in the sense that it has small Fourier coefficients. The second ingredient is to write the quantity in (\ref{eq:conclusion}) in terms of the Fourier coefficients of $a_i$, and to prove that one can replace $a_i$ by $a_i'$ at the cost of a small error term. The last step is to prove the corresponding result for $a_i'$. As $a_i'$ is set-like, this is essentially (a quantitative version of) the Cauchy-Davenport theorem.

We now carry out the details. We construct $a_i'$ as follows. Let
$\epsilon=\epsilon(\delta,M,q)>0$ be a small parameter to be chosen
later. Define
\[ R=\{r\in\Z_{N}:|\hat{a_i}(r)|\geq\epsilon\}. \]
Define a Bohr set $B$ by
\[ B=\{x\in\Z_{N}:|e_{N}(xr)-1|\leq\epsilon,r\in R\}. \]
Let $\beta$ be the normalized characteristic function of $B$:
\[ \beta=\frac{\mathbf{1}_B}{|B|}. \]
Let
\[ a_i'=a_i*\beta*\beta,\quad a_i''=a_i-a_i'. \]
We record the desired properties of $a_i'$ and $a_i''$.

\begin{lem}\label{lem:decomposition}
Suppose that the function $a_i$ satisfies the hypotheses in
Proposition \ref{prop:trans} and the functions $a_i'$ and $a_i''$
are constructed as above. Then,
\begin{enumerate}
\item $\sum_{x\in\Z_N}a_i'(x)=\delta_i$.
\item ($a_i'$ is set-like) $0\leq a_i'\leq (1+O_{\epsilon,M,q}(\eta))/N$.
\item ($a_i''$ is uniform) $\|\hat{a_i''}\|_{\infty}\ll\epsilon$.
\item $\|\hat{a_i'}\|_q\leq M$ and $\|\hat{a_i''}\|_q\leq M$.
\end{enumerate}
\end{lem}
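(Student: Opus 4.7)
The plan is to reformulate everything in Fourier space, where the key identities are $\widehat{a_i'}(r) = \widehat{a_i}(r)\widehat{\beta}(r)^2$ and $\widehat{a_i''}(r) = \widehat{a_i}(r)(1-\widehat{\beta}(r)^2)$. Observe that $R$ is symmetric (because $a_i$ is real-valued, so $|\widehat{a_i}(-r)| = |\widehat{a_i}(r)|$) and the Bohr condition is symmetric in $r$, so $B = -B$; therefore $\beta$ is an even real function, $\widehat{\beta}$ is real-valued, and $|\widehat{\beta}(r)| \leq \widehat{\beta}(0) = 1$. In particular both $\widehat{\beta}(r)^2$ and $1-\widehat{\beta}(r)^2$ lie in $[0,1]$. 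Item (1) is then immediate by evaluating at $r=0$: $\sum_x a_i'(x) = \widehat{a_i'}(0) = \widehat{a_i}(0) \cdot \widehat{\beta}(0)^2 = \delta_i$. Item (4) also follows at once from the pointwise bounds $|\widehat{a_i'}(r)|, |\widehat{a_i''}(r)| \leq |\widehat{a_i}(r)|$ and the hypothesis $\|\widehat{a_i}\|_q \leq M$.

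For item (3) I would split according to the large spectrum $R$. When $r \notin R$, we have $|\widehat{a_i}(r)| < \epsilon$ by definition, giving $|\widehat{a_i''}(r)| \leq |\widehat{a_i}(r)| \leq \epsilon$. When $r \in R$, the defining inequality of $B$ yields $|e_N(rx)-1| \leq \epsilon$ for every $x \in B$, whence $|\widehat{\beta}(r) - 1| \leq \epsilon$ and consequently $|1 - \widehat{\beta}(r)^2| \leq 2\epsilon + \epsilon^2 \leq 3\epsilon$. Combined with $|\widehat{a_i}(r)| \leq \widehat{a_i}(0) \leq \widehat{\mu_i}(0) \leq 1 + \eta$, this gives $|\widehat{a_i''}(r)| \ll \epsilon$, which is (3).

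The main obstacle is item (2), because one must convert the global pseudorandomness of $\mu_i$ into a sharp pointwise bound on $a_i'$. The strategy is: by nonnegativity and $a_i \leq \mu_i$, one has $0 \leq a_i'(x) \leq (\mu_i * \beta * \beta)(x)$, and then Fourier inversion gives
\[ (\mu_i * \beta * \beta)(x) = \frac{1}{N}\widehat{\mu_i}(0)\widehat{\beta}(0)^2 + \frac{1}{N}\sum_{r \neq 0} \widehat{\mu_i}(r)\widehat{\beta}(r)^2 e_N(-rx). \]
The main term is $(1+O(\eta))/N$ by \eqref{eq:random_mu}, while the error, again using \eqref{eq:random_mu} together with Parseval $\sum_r \widehat{\beta}(r)^2 = N\|\beta\|_2^2 = N/|B|$, is at most $\eta/|B|$. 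To absorb this into $O_{\epsilon,M,q}(\eta)/N$ one needs the lower bound $|B| \gg_{\epsilon,M,q} N$. This is the only place where all three parameters enter: from $|\widehat{a_i}(r)| \geq \epsilon$ on $R$ together with $\|\widehat{a_i}\|_q \leq M$ one gets $|R| \leq M^q/\epsilon^q$, and then the standard Dirichlet/pigeonhole argument on the $|R|$-dimensional torus yields $|B| \geq N(\epsilon/C)^{|R|}$, producing the required lower bound. Putting these together gives (2), completing the proof.
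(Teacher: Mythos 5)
Your proof is correct, and it is essentially the standard argument behind the result the paper cites (Proposition 5.1 of \cite{green2006}): items (1), (3), (4) from the multiplier identities $\widehat{a_i'}=\widehat{a_i}\,\widehat{\beta}^2$, $\widehat{a_i''}=\widehat{a_i}(1-\widehat{\beta}^2)$, and item (2) from majorizing by $\mu_i*\beta*\beta$, Parseval, and the lower bound $|B|\gg_{\epsilon,M,q}N$ coming from $|R|\leq (M/\epsilon)^q$. Since the paper itself only gives the citation, your write-up supplies a complete self-contained proof along the same lines; no gaps.
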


\begin{proof}
See Proposition 5.1 of \cite{green2006}.
\end{proof}

We now consider the quantity in (\ref{eq:conclusion}), and show that the function $a_i$ there can be replaced by $a_i'$ at the cost of a small error term.

\begin{lem}\label{lem:error}
With $a_i'$ defined as above, we have, for any $x\in\Z_{N}$,
\[ \left|\sum_{y,z}a_i(y)a_i(z)a_i(x-y-z)-\sum_{y,z}a_i'(y)a_i'(z)a_i'(x-y-z)\right|\ll\frac{M^q\epsilon^{3-q}}{N}. \]
\end{lem}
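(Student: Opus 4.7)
The plan is to pass to the Fourier side and exploit the fact that $a_i''$ has small $\ell^\infty$ Fourier mass (part (3) of Lemma \ref{lem:decomposition}) while both $a_i'$ and $a_i''$ retain the $\ell^q$ Fourier bound (part (4)). Since $\sum_{y,z} f(y)g(z)h(x-y-z) = (f*g*h)(x)$, Fourier inversion in the paper's convention yields
\[ \sum_{y,z} f(y)g(z)h(x-y-z) = \frac{1}{N} \sum_{r\in\Z_N} \hat{f}(r)\hat{g}(r)\hat{h}(r)\, e_N(-rx). \]
Writing $a_i = a_i' + a_i''$ and expanding trilinearly, the difference in the lemma becomes a sum of at most seven Fourier expressions of the form $\frac{1}{N}\sum_r \hat{f_1}(r)\hat{f_2}(r)\hat{f_3}(r) e_N(-rx)$, where each $f_j \in \{a_i', a_i''\}$ and at least one of them equals $a_i''$.

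For each such term I will peel the distinguished $a_i''$ factor using the interpolation identity
\[ |\hat{a_i''}(r)| = |\hat{a_i''}(r)|^{3-q} \cdot |\hat{a_i''}(r)|^{q-2} \le \|\hat{a_i''}\|_\infty^{3-q}\, |\hat{a_i''}(r)|^{q-2}, \]
which is valid precisely because $q \in (2,3)$ makes both exponents positive. Combined with $\|\hat{a_i''}\|_\infty \ll \epsilon$, this reduces matters to estimating $\sum_r |\hat{a_i''}(r)|^{q-2}\, |\hat{f_2}(r)|\, |\hat{f_3}(r)|$. I will then apply H\"older's inequality with exponents $(q/(q-2),\, q,\, q)$, which are conjugate since $(q-2)/q + 1/q + 1/q = 1$, to bound this sum by $\|\hat{a_i''}\|_q^{q-2}\, \|\hat{f_2}\|_q\, \|\hat{f_3}\|_q \le M^q$, invoking the $\ell^q$ bounds from Lemma \ref{lem:decomposition}(4) together with the hypothesis on $\hat{a_i}$.

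Assembling these ingredients, each of the (at most) seven cross terms contributes $O(M^q\epsilon^{3-q}/N)$, and summing gives the claimed estimate. There is no real obstacle here: the argument is entirely mechanical once the right interpolation exponents are chosen. The only conceptual point is the $\ell^\infty$-to-$\ell^q$ trade-off, which converts the uniformity bound $\|\hat{a_i''}\|_\infty \ll \epsilon$ into the savings factor $\epsilon^{3-q}$ while absorbing the remaining $\hat{a_i''}$ mass against the $\ell^q$ bounds on the other factors.
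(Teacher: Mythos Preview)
Your proof is correct and follows essentially the same route as the paper's: expand $a_i=a_i'+a_i''$ trilinearly, pass each of the seven cross terms to the Fourier side, peel off $|\hat{a_i''}(r)|^{3-q}\le\|\hat{a_i''}\|_\infty^{3-q}\ll\epsilon^{3-q}$ from the distinguished factor, and bound the remaining sum $\sum_r|\hat{f_1}|\,|\hat{f_2}|\,|\hat{a_i''}|^{q-2}$ by H\"older with exponents $(q,q,q/(q-2))$ and the $\ell^q$ bounds from Lemma~\ref{lem:decomposition}(4). The only cosmetic difference is the sign in the exponential in the inversion formula, which is irrelevant once absolute values are taken.
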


\begin{proof}
Since $a_i=a_i'+a_i''$, the left side above is bounded by the sum of seven terms of the form
\[ S(f,g,h)=\left|\sum_{y,z}f(y)g(z)h(x-y-z)\right|, \]
where $f,g,h\in\{a_i',a_i''\}$ and $(f,g,h)\neq (a_i',a_i',a_i')$. Without loss of generality, we may assume that $h=a_i''$. We have
\[ S(f,g,h)=\frac{1}{N}\left|\sum_r\hat{f}(r)\hat{g}(r)\hat{h}(r)e_N(rx)\right|\leq\frac{1}{N}\sum_r|\hat{f}(r)\hat{g}(r)\hat{h}(r)|. \]
By Lemma \ref{lem:decomposition} (3), we have $|\hat{h}(r)|\ll\epsilon$. Hence,
\[ S(f,g,h)\ll N^{-1}\epsilon^{3-q}\sum_r|\hat{f}(r)\hat{g}(r)\hat{h}(r)^{q-2}|. \]
Now by H\"{o}lder's inequality and Lemma \ref{lem:decomposition} (4),
\[ S(f,g,h)\ll N^{-1}\epsilon^{3-q}\|\hat{f}\|_q\|\hat{g}\|_q\|\hat{h}\|_q^{q-2}\ll N^{-1}M^q\epsilon^{3-q}. \]
\end{proof}

We now treat the set-like function $a_i'$. Let
\[ X_i=\{x\in\Z_N:a_i'(x)\geq \delta^2/N\}. \]
Then
\[ \delta_i=\sum_{x\in\Z_N}a_i'(x)\leq (1+O_{\epsilon,M,q}(\eta))\frac{|X_i|}{N}+\delta^2. \]
Hence,
\[ |X_i|\geq (1-O_{\epsilon,M,q}(\eta))(\delta_i-\delta^2)N. \]
For $\eta$ sufficiently small depending on $\epsilon,\delta,M,q$,
the $O_{\epsilon,M,q}(\eta)$ term above is at most $\delta/8$ (say),
and thus for $N$ sufficiently large,
\[ \min(|X_1|,|X_2|,|X_3|,|X_1|+|X_2|+|X_3|-N)\geq\tfrac{\delta N}{2}. \]
The problem is now reduced to proving that $X_1+X_2+X_3=\Z_N$, and that each element of $\Z_N$ can be written in many ways as $x_1+x_2+x_3$ ($x_i\in X_i$).

\begin{lem}\label{lem:set}
Let $X_1,X_2,X_3\subset\Z_N$. Let $\theta_i=|X_i|/N$. Suppose that
\[ \theta=\min(\theta_1,\theta_2,\theta_3,\theta_1+\theta_2+\theta_3-1)>0. \]
Theree exists $N(\theta)$ and $c(\theta)>0$, such that if $N\geq N(\theta)$, then for any $x\in\Z_N$, there are at least $c(\theta)N^2$ ways to write $x=x_1+x_2+x_3$ with $x_i\in X_i$.
\end{lem}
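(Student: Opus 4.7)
The plan is to reduce this to a quantitative Cauchy--Davenport statement, specifically Pollard's inequality (which applies since $N$ is prime, as specified in Proposition \ref{prop:trans}). Writing $r(x) := |\{(y,z,w) \in X_1 \times X_2 \times X_3 : y+z+w = x\}|$, I would first rewrite
\[ r(x) = \sum_{s \in x-X_3} r_{12}(s), \qquad r_{12}(s) := |\{(y,z) \in X_1 \times X_2 : y+z=s\}|, \]
so that $r(x)$ is a partial sum of the representation function for $X_1+X_2$ over the translate $x - X_3$.

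The key ingredient is Pollard's inequality: since $N$ is prime, for any integer $1 \le t \le \min(|X_1|,|X_2|)$,
\[ \sum_{s \in \Z_N} \min\bigl(r_{12}(s),\, t\bigr) \ge t \min\bigl(N,\ |X_1|+|X_2|-t\bigr). \]
Because $r_{12}(s) \ge \min(r_{12}(s), t)$ pointwise, and the complement of $x - X_3$ in $\Z_N$ has cardinality $N - |X_3|$ with each truncated summand bounded by $t$, this yields
\[ r(x) \ \ge\ t \min\bigl(N,\ |X_1|+|X_2|-t\bigr) - t(N - |X_3|) \ =\ t \min\bigl(|X_3|,\ |X_1|+|X_2|+|X_3|-N-t\bigr). \]

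It remains to optimize in $t$. I would take $t = \lfloor \theta N/2 \rfloor$. The hypothesis $\theta_i \ge \theta$ forces $t \le \theta N/2 \le |X_i|$ for $i=1,2$, so Pollard applies; the hypothesis $\theta_1+\theta_2+\theta_3 - 1 \ge \theta$ then gives $|X_1|+|X_2|+|X_3|-N-t \ge \theta N/2$, while $|X_3| \ge \theta N \ge \theta N/2$, so both arguments of the outer minimum are at least $\theta N/2$. This yields $r(x) \ge (\theta N/2 - 1)(\theta N/2) \ge \theta^2 N^2/8$ for $N \ge 4/\theta$, and one may take $c(\theta) = \theta^2/8$, $N(\theta) = 4/\theta$. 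There is no real obstacle once Pollard's inequality is invoked; the only choice to make is the truncation level $t$, whose value is forced by wanting both terms inside the minimum to be of comparable size $\Theta(\theta N)$.
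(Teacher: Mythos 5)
Your argument is correct. Note first that the paper does not actually prove this lemma: it simply cites Lemma 3.3 of Li and Pan \cite{li2007}, so any complete argument here is necessarily ``different'' from the paper's. Your route through Pollard's inequality is a clean, self-contained substitute. The reduction $r(x)=\sum_{s\in x-X_3} r_{12}(s)$, the passage to the truncated sum, the loss of at most $t(N-|X_3|)$ from the complement of $x-X_3$, and the resulting bound $r(x)\ge t\min\bigl(|X_3|,\,|X_1|+|X_2|+|X_3|-N-t\bigr)$ are all right, and the choice $t=\lfloor\theta N/2\rfloor$ does make both arguments of the minimum at least $\theta N/2$, giving $c(\theta)=\theta^2/8$ for $N\ge 4/\theta$. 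You are also right to flag that primality of $N$ is essential and is supplied by the context of Proposition \ref{prop:trans}: as stated, the lemma is false for composite $N$ (take $N$ even and each $X_i$ the set of even residues, so $\theta=1/2$ but no odd $x$ is represented), and Pollard's inequality is exactly where primality enters your proof. The only cost of your approach is that it outsources the arithmetic content to Pollard's theorem, which is a genuine (if classical) result beyond Cauchy--Davenport; a reader would want that reference made explicit, just as the paper makes its reference to \cite{li2007} explicit.
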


\begin{proof}
See Lemma 3.3 of \cite{li2007}.
\end{proof}

It follows that for any $x\in\Z_N$,
\[ \sum_{y,z}a_i'(y)a_i'(z)a_i'(x-y-z)\geq\frac{\delta^6}{N^3}\sum_{y\in X_1,z\in X_2,x-y-z\in X_3}1\geq\frac{c(\delta)}{N}, \]
where $c(\delta)>0$ depends only on $\delta$. This proves
\eqref{eq:conclusion} in view of Lemma \ref{lem:error}, as long as
we choose $\epsilon$ small enough depending on $\delta,M,q$.


\subsection{Deduction of Theorem \ref{thm:main} and Theorem \ref{thm:main2}}

We will prove these two theorems in parallel. Let $n$ be a
sufficiently large odd positive integer. We seek for a
representation $n=a_1+a_2+a_3$ with $a_1,a_2,a_3\in A$.

For Theorem \ref{thm:main}, let $\delta>0$ be a positive constant
such that \begin{equation}\label{eq:lower} \left|A\cap
\left[1,N\right]\right|>\left(\frac{5}{8}+\delta\right)\frac{N}{\log
N}
\end{equation}
for all sufficiently large $N$. For Theorem \ref{thm:main2}, let
$\delta>0$ be the constant as given in the statement.

In order for the transference principle to be applicable, one needs
to find a pseudorandom majorant $\mu$ for the characteristic
function of $A$. The characteristic function of the primes is not
pseudorandom because the primes are not equidistributed in residue
classes with small modulus. To remove this issue, we use the
``W-trick'' \cite{green2005,green-tao2}.

Let $z=z(\delta)$ be a large parameter to be chosen later and set
$W=\prod_{p\leq z}p$, where the product is over primes. Later we
will see that the larger $z$ is chosen to be, the more pseudorandom
our majorant will be (Lemma \ref{lem:psd}). For now, define a
function $f:\Z_W^*\rightarrow [0,1]$ by
\[ f(b)=\max\left(\frac{3\phi(W)}{2n}\sum_{x\in A\cap (W\Z+b),x<2n/3}\log x-\frac{\delta}{8},0\right). \]
Hence $0\leq f(b)\leq 1$ for every $b$. The prime number theorem in
arithmetic progressions implies the following. In the case of
Theorem \ref{thm:main}, by \eqref{eq:lower} we get
\begin{equation}\label{eq:cond1}
\sum_{b\in\Z_W^*}f(b)>\tfrac{5}{8}\phi(W).
\end{equation}
In the case of Theorem \ref{thm:main2}, by the hypotheses
\eqref{eq:mod3} and \eqref{eq:modW} we get
\begin{equation}\label{eq:cond2}
2\alpha_1+\alpha_2>\tfrac{3}{2},\ \
2\alpha_2+\alpha_1>\tfrac{3}{2},\ \ f(b)>0\text{ for every }b,
\end{equation}
where $\alpha_i$ ($i=1,2$) is the average of $f$ over those reduced
residues $b\pmod W$ with $b\equiv i\pmod 3$.

Write $m=W/2$ and note that $\Z_W^*\cong\Z_m^*$. If we work modulo
$m$, then Proposition \ref{prop:main} together with \eqref{eq:cond1}
or Proposition \ref{prop:main2} together with \eqref{eq:cond2}
implies that there exists $b_1,b_2,b_3\in\Z_W^*$ with
$b_1+b_2+b_3\equiv n\pmod m$ such that
\begin{equation}\label{eq:proof_1}
f(b_1)f(b_2)f(b_3)>0,\quad f(b_1)+f(b_2)+f(b_3)>\tfrac{3}{2}.
\end{equation}
Since $n$ is odd, we also know that $b_1+b_2+b_3\equiv n\pmod 2$,
and hence $b_1+b_2+b_3\equiv n\pmod W$.

The rest of the arguments will work simultaneously for both Theorem
\ref{thm:main} and Theorem \ref{thm:main2}. Let
\[ A_i=\left\{\frac{x-b_i}{W}:x\in A\cap (W\Z+b_i),x<\frac{2n}{3}\right\}. \]
Our goal becomes to prove that
\[ \frac{n-b_1-b_2-b_3}{W}\in A_1+A_2+A_3. \]
We pick a prime $N\in [(1+\kappa)n/W,(1+2\kappa)n/W]$, where $\kappa>0$ is small. By the choice of $N$, it is easy to see that it sufficies to show that $(n-b_1-b_2-b_3)/W\in A_1+A_2+A_3$ when $A_1,A_2,A_3$ are considered as subsets of $\Z_N$. From now on, we view $A_1,A_2,A_3$ as subsets of $\Z_{N}$, and we shall prove that
\[ A_1+A_2+A_3=\Z_N. \]
We have now completed the W-trick: there are no local obstructions for $A_1,A_2,A_3$ to be pseudorandom.

For $x\in [0,N-1]$, define
\[ \mu_i(x)=\begin{cases} \phi(W)\log(Wx+b_i)/(WN) &  \text{if }Wx+b_i\text{ is prime,} \\ 0 & \text{otherwise.} \end{cases} \]
Let $ a_i(x)=1_{A_i}(x)\mu_i(x)$. We then have the majorization condition
\[ 0\leq a_i(x)\leq\mu_i(x). \]
The sum of $a_i(x)$ is
\[ \delta_i=\sum_{x=0}^{N-1}a_i(x)=\frac{\phi(W)}{WN}\sum_{x\in A\cap(W\Z+b_i),x<2n/3}\log x=\frac{2n}{3WN}\left(f(b_i)+\frac{\delta}{8}\right). \]
By choosing $\kappa=\kappa(\delta)$ small enough, we then have
\[ \delta_i\geq\tfrac{2}{3}f(b_i)+\tfrac{\delta}{20}. \]
Hence by \eqref{eq:proof_1},
\[ \delta_i\geq\tfrac{\delta}{20},\quad \delta_1+\delta_2+\delta_3\geq 1+\tfrac{\delta}{20}, \]
This confirms the mean condition \eqref{eq:mean}. Recall that
$W=\prod_{p\leq z}p$.

\begin{lem}[Pseudorandom majorant]\label{lem:psd}
Suppose that $N$ and $z$ are sufficiently large. Then
\[ \sup_{r\neq 0}|\hat{\mu_i}(r)|\leq\frac{2\log\log z}{z}. \]
\end{lem}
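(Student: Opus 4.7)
My plan is to reduce $\hat{\mu_i}(r)$ to a weighted exponential sum over primes in an arithmetic progression, and then bound such sums using Vinogradov's method combined with the Siegel--Walfisz theorem. This is exactly the analysis of the pseudorandom majorant that underlies the W-trick literature; the result we need is essentially contained in Proposition 5.2 of \cite{green2005}.

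Unfolding the definition and replacing the prime indicator (times $\log p$) by the von Mangoldt function $\Lambda$ at the cost of a negligible prime-power error, the task reduces to bounding
\[ T_r = \sum_{\substack{n < WN\\ n\equiv b_i\,(W)}} \Lambda(n)\, e\!\left(\frac{rn}{WN}\right) \]
for nonzero $r \in \Z_N$, and showing that $|T_r| \ll_A WN \log^{-A} N$ for any fixed $A$. Expand the congruence condition $n \equiv b_i \pmod{W}$ via Dirichlet characters modulo $W$, decomposing $T_r$ into a sum over $\chi \pmod W$ of the twisted exponential sums $\sum_{n < WN} \chi(n)\Lambda(n)\, e(rn/(WN))$. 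For the principal character, Vinogradov's estimate applies: since $r$ is a nonzero element of $\Z_N$ with $N$ prime, the phase $r/(WN)$ is far from any rational of small denominator once $N$ is large compared to $W$, yielding a saving of any power of $\log N$. For non-principal $\chi$, a combination of partial summation with Siegel--Walfisz (applicable since $W$ may be taken to grow at most polylogarithmically in $N$) gives the same saving. Multiplying by the normalization factor $\phi(W)/(WN)$ produces the bound $|\hat\mu_i(r)| \ll_A \log^{-A} N$, and the explicit form $2\log\log z/z$ follows by tracking the Mertens-type estimate $W/\phi(W) \sim e^\gamma \log z$ carefully through the argument.

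The main obstacle is not the exponential sum estimate itself---which is entirely classical---but the need for uniformity in $r$, in $W$, and in the range of summation. Arriving at the clean explicit bound $2\log\log z/z$, rather than a more diffuse $O_A(\log^{-A}N)$, requires following the calculation in \cite{green2005} line by line; in practice I would cite that reference after verifying that our normalization matches theirs. The key conceptual point is that the W-trick precisely neutralizes the large Fourier coefficients that would otherwise arise from primes concentrating in residue classes of small modulus, making the resulting majorant genuinely pseudorandom at the level required by the transference principle in Proposition \ref{prop:trans}.
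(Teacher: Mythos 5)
Your overall route---unfolding $\hat{\mu_i}(r)$ into an exponential sum over primes in the progression $b_i\pmod W$ and invoking the circle-method analysis of \cite{green2005}---is the same as the paper's, which simply cites Lemma 6.2 of \cite{green2005}. However, your sketch of that analysis contains a step that is genuinely false: the claimed uniform bound $|T_r|\ll_A WN\log^{-A}N$ for \emph{all} nonzero $r\in\Z_N$, and hence $|\hat{\mu_i}(r)|\ll_A\log^{-A}N$. If that were true, the lemma would hold with a bound tending to $0$ as $N\to\infty$; in fact the supremum is of order $1/z$, which in this paper is a fixed positive constant (here $z=z(\delta)$ and $W$ do \emph{not} grow with $N$). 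Concretely, let $q$ be the least prime exceeding $z$, so $\gcd(q,W)=1$, and take $r$ to be the nearest integer to $N/q$. Then over the whole summation range the phase $r(p-b_i)/(WN)$ differs by $O(1)$ from a linear phase of frequency $1/(qW)$, a rational with \emph{constant} denominator; splitting the primes $p\equiv b_i\pmod W$ into residue classes modulo $qW$ and applying Siegel--Walfisz, the Ramanujan sum $\sum_{c\in\Z_q^*}e(c/q)=-1$ survives as a genuine main term, and after multiplying by $\phi(W)/(WN)$ one gets $|\hat{\mu_i}(r)|\asymp 1/\phi(q)\approx 1/z$. The precise point of failure is your assertion that ``$r$ nonzero and $N$ prime forces $r/(WN)$ to be far from rationals of small denominator'': having large denominator in lowest terms does not prevent a rational from being extremely well approximated by one of small denominator.

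The correct argument therefore must split $r/N$ into major and minor arcs. On minor arcs Vinogradov's method does give any power of $\log N$; on a major arc $r/N\approx a/q$ the main term is governed by the part $q_1$ of the modulus coprime to $W$: if $q_1=1$ and $q>1$ the relevant character/Ramanujan sum vanishes (this is exactly what the $W$-trick buys), while otherwise every prime factor of $q_1$ exceeds $z$, forcing $q_1>z$ and a contribution of size about $1/\phi(q_1)\le \sup_{q_1>z}\,(q_1/\phi(q_1))\cdot q_1^{-1}\ll(\log\log z)/z$. This, and not the Mertens estimate $W/\phi(W)\sim e^{\gamma}\log z$ that you invoke, is the source of the $2\log\log z/z$ in the statement. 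Since the paper's own proof is just a citation of Lemma 6.2 of \cite{green2005}, deferring to that reference is acceptable; but the intermediate bound you state would not survive being written out, and the final constant cannot be recovered from the reasoning as given.
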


\begin{proof}
See Lemma 6.2 of \cite{green2005}.
\end{proof}

This gives \eqref{eq:random_mu} when $r\neq 0$. The case $r=0$ follows from the prime number theorem.

\begin{lem}[Discrete majorant property]
Suppose that $q>2$. Then there is an absolute constant $C(q)$ such that
\[ \sum_r|\hat{a_i}(r)|^q\leq C(q). \]
\end{lem}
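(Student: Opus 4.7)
The plan is to establish this as a standard discrete restriction-type theorem for the primes, following the $TT^{*}$/duality framework used in \cite{green2005}. The cornerstone is a level-set bound on $\hat{a_i}$: for each $\lambda > 0$, setting $R_\lambda = \{r \in \Z_N : |\hat{a_i}(r)| \ge \lambda\}$, I aim to show $|R_\lambda| \ll \lambda^{-2}$ in a suitable range, and then recover the $L^q$ bound via the layer-cake identity, where the hypothesis $q > 2$ ensures convergence at the top of the integration.

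For the level-set bound, take $\sigma(r) = \overline{\hat{a_i}(r)}/|\hat{a_i}(r)|$ on $R_\lambda$ (and $\sigma(r) = 0$ elsewhere) and define $\Sigma(x) = \sum_r \sigma(r) e_N(rx)$. Starting from
\[ \lambda |R_\lambda| \le \sum_{r \in R_\lambda} |\hat{a_i}(r)| = \sum_x a_i(x) \Sigma(x) \]
and applying Cauchy--Schwarz via the factorization $a_i \Sigma = (a_i/\mu_i^{1/2})(\mu_i^{1/2} \Sigma)$, together with the pointwise inequality $a_i^2/\mu_i \le a_i$ (which uses the majorization $a_i \le \mu_i$), I obtain
\[ \lambda^2 |R_\lambda|^2 \le \|a_i\|_1 \sum_x \mu_i(x) |\Sigma(x)|^2. \]
Expanding $|\Sigma|^2$ and applying Plancherel with the pseudorandomness bound $|\hat{\mu_i}(r) - \delta_{r,0}| \le \eta$ from Lemma \ref{lem:psd}, the right-hand side is controlled by $\|a_i\|_1 \bigl[(1+O(\eta))|R_\lambda| + \eta |R_\lambda|^2\bigr]$. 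Rearranging yields $|R_\lambda| \ll \lambda^{-2}$ provided $\lambda^2 \gg \eta$. Inserting this into the layer-cake identity
\[ \sum_r |\hat{a_i}(r)|^q = q \int_0^\infty \lambda^{q-1} |R_\lambda|\, d\lambda, \]
with $\|\hat{a_i}\|_\infty \le \|a_i\|_1 = O(1)$ serving as the effective upper cutoff, gives a contribution of order $\int \lambda^{q-3}\, d\lambda \ll_q 1$ from the range $\lambda^2 \gg \eta$.

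The delicate step, and the main obstacle, is the small-$\lambda$ regime $\lambda^2 \lesssim \eta$, where the $TT^{*}$ argument degenerates and the naive Parseval estimate $|R_\lambda| \le \sum_r |\hat{a_i}(r)|^2/\lambda^2 \ll \log N/\lambda^2$ introduces an unwelcome factor of $\log N$. The resolution is to choose the $W$-trick parameter $z = z(\delta, q)$ large enough that $\eta \ll \log \log z/z$ is small enough to absorb the $\log N$ in the integrated bound; a careful balancing of parameters along these lines is carried out in Lemma 6.6 of \cite{green2005}, which is the reference one would cite for the complete argument.
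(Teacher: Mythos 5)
The paper itself does not prove this lemma; it simply cites Lemma 6.6 of \cite{green2005}. Your $TT^*$ framework (level-set estimate via Cauchy--Schwarz against the majorant, pseudorandomness of $\hat{\mu_i}$, layer-cake integration) is indeed the right skeleton and matches the spirit of the cited proof. However, your handling of the small-$\lambda$ regime --- which you correctly identify as the crux --- does not work, and this is a genuine gap rather than a deferred technicality.

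The problem is quantitative. The parameter $\eta$ is a \emph{constant}: $z=z(\delta)$ is fixed before $N\to\infty$, so $\eta\asymp \log\log z/z$ does not shrink with $N$. Your $TT^*$ bound gives $|R_\lambda|\ll\lambda^{-2}$ only for $\lambda\gg\eta^{1/2}$, and below that threshold the only available input in your argument is Parseval, $\sum_r|\hat{a_i}(r)|^2=N\sum_x a_i(x)^2\ll_W\log N$, yielding a contribution to $\sum_r|\hat{a_i}(r)|^q$ of order $\eta^{(q-2)/2}\log N$ from the range $\lambda\lesssim\eta^{1/2}$. For any fixed $\eta>0$ this tends to infinity with $N$, so no choice of $z=z(\delta,q)$ can ``absorb the $\log N$''; the constant $C(q)$ in the lemma must be uniform in $N$. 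The actual proof in \cite{green2005} (and in the Green--Tao restriction paper it builds on) closes this gap by using information strictly stronger than the $L^\infty$ pseudorandomness bound \eqref{eq:random_mu}: the explicit major-arc expansion of $\hat{\mu_i}$ (Gauss/Ramanujan-sum structure with weights decaying in the denominator $q$) is fed into the off-diagonal sum $\sum_{r,r'\in R_\lambda}|\hat{\mu_i}(r-r')|$, which upgrades the level-set bound to hold essentially down to $\lambda\approx N^{-c}$; below that, the trivial bound $|R_\lambda|\le N$ suffices. Without that additional arithmetic input your argument cannot be completed, so the citation at the end of your proposal is carrying the main burden of the proof rather than a ``balancing of parameters.''
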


\begin{proof}
See Lemma 6.6 of \cite{green2005}.
\end{proof}

If $z=z(\delta)$ is chosen large enough, then Proposition
\ref{prop:trans} applies to give
\[ \sum_{y,z\in\Z_N}a_1(y)a_2(z)a_3(x-y-z)>0. \]
This shows that $A_1+A_2+A_3=\Z_N$, thus completing the proof of
Theorem \ref{thm:main} and Theorem \ref{thm:main2}.

We end this paper with a final remark. In the statement of Theorem
\ref{thm:main2}, we get the threshold density $1/2$ because we
artificially reduced the density by a factor of $2/3$ so that
equality in $\Z_N$ implies equality in $\Z$. The fact that $1/2$ is
a natural barrier may be related to the difficulty of proving
Vinogradov's theorem using purely sieve theory (without injecting
additional ingredients), due to the {\em parity phenomenon} in
analytic number theory. One way to state the parity problem is as
follows. It is very difficult to define ``almost primes" (or, more
precisely, weight functions) with good properties coming from sieve
theory in such a way that the density of the primes in the almost
primes gets larger than $1/2$. For an excellent account of sieve
theory including the parity phenomenon, see the book \cite{opera}.

\bibliographystyle{plain}
\bibliography{density_vinogradov}{}

\end{document}